\numberwithin{equation}{section}
\newtheorem{theorem}{Theorem}[section]
\newtheorem{corollary}[theorem]{Corollary}
\newtheorem{lemma}[theorem]{Lemma}
\newtheorem{remark}[theorem]{Remark}
\newtheorem{proposition}[theorem]{Proposition}
\newtheorem{definition}[theorem]{Definition}
\newcommand{\Hbx}[0]{\left|\text{Hess}_{b_x^2}-\frac{\Delta b_x^2}{n}g\right|}
\newcommand{\Hubx}[0]{\left|\text{Hess}_{\underline{b}_x^2}-\frac{\Delta \underline{b}_x^2}{n}g\right|}
\newcommand{\R}[0]{\mathbb R}
\newcommand{\tv}[0]{\mathrm{v}_M}
\newcommand{\Brp}{B_p(r)}
\newcommand{\Brpi}{B_{p_i}(r)}
\title[Quantitative Rigidity]{Quantitative Rigidity Using Colding's Monotonicity Formulas for Ricci Curvature}
\thanks{
CB supported in part by NSF DMS CAREER-1750254. JP was supported by the National Research Foundation of Korea (NRF) grant funded by the Korea government (MSIT) RS-2024-00346651. }
\author[Breiner]{Christine Breiner}
\address{Brown University\\
Department of Mathematics\\
Providence, RI, USA}
\email{christine\underline{ }breiner@brown.edu}
\author[Park]{Jiewon Park}
\address{Korea Advanced Institute of Science and Technology (KAIST)\\
Department of Mathematical Sciences\\
Daejeon, South Korea}
\email{jiewonpark@kaist.ac.kr}
\begin{document}

\maketitle

\begin{abstract}
In \cite{Colding}, Colding proved monotonicity formulas for the Green function on manifolds with nonnegative Ricci curvature. Inspired by the sharp estimates relating the pinching of monotone quantities to the splitting function in \cite{cjn}, in this paper we investigate quantitative control obtained from pinching of Colding's monotone functionals. From the Green functions with poles at $(k+1)$-many independent points, $k$-splitting functions are constructed with regularity quantitatively controlled by the pinching. Moreover, the pinching at these independent points controls the distance to the nearest cone of the form $\R^k \times C(X)$. 
\end{abstract}

\section{Introduction}

The investigation of splitting phenomena on Riemanniann manifolds with non-negative Ricci curvature has a long history. Originating with the work of Cohn-Vossen \cite{cv} in dimension two, it was later extended by Toponogov \cite{t} to higher dimensions for manifolds with non-negative sectional curvature. In  
the celebrated work \cite{cg}, Cheeger and Gromoll prove rigidity for non-negative Ricci in all dimensions. Namely, given a geodesically complete connected Riemannian manifold $(M^n,g)$ with $\mathrm{Ric} \geq 0$, if $M$ contains a geodesic line then $M$ splits isometrically as a product $N \times \mathbb R$. In \cite{cheeger-colding}, Cheeger and Colding extend this rigidity result to almost rigidity, demonstrating that if there exist balls with ``almost geodesic lines" then these balls are Gromov-Hausdorff close to a ball in a product space $X \times \mathbb R$.

Using monotone quantities to prove rigidity results is a common theme in differential geometry and such results are now a crucial first step in improving regularity results. Indeed, starting with almost rigidity statements and using the quantitative stratification techniques introduced by \cite{chen3,chen}, and later refined by \cite{nv,jn}, provides a natural framework for improved regularity. Recently,
Cheeger, Jiang, and Naber \cite{cjn} adapted this framework to a monotone quantity they call the \emph{local pointed entropy} to prove that the $k$-th stratum, $\mathcal S^k$, of the singular set of a Ricci limit space is $k$-rectifiable and for a.e. $x \in \mathcal S^k$, every tangent cone at $x$ is $k$-symmetric. Key to their arguments is establishing the existence and regularity of the so called \emph{almost splitting functions} which play the role of the coordinate functions in the almost rigidity statements. The regularity result is encoded in a sharp splitting estimate (cf. \cite[Theorem 6.1]{cjn}) which controls the behavior of these almost splitting functions by the pinching of the local pointed entropy.

 Colding and Minicozzi \cite{cm-family}, extending the work of 
Colding \cite{Colding}, determine a family of new monotone quantities for Riemannian manifolds with non-negative Ricci curvature using a power of the Green function $G$, namely $b_x=G(x,\cdot)^{1/(2-n)}$, and these quantities have already been crucial in establishing regularity results. For example, by determining a sufficiently fast decay rate for the derivative of  one such monotone quantity,  Colding and Minicozzi  \cite{cm-einstein} prove the uniqueness of tangent cones at infinity for Einstein manifolds (presuming one tangent cone has smooth cross section). 
(See also Gigli and Violo \cite{gigli-violo} for analogous monotonicity formulas on RCD spaces and rigidity and almost rigidity results therein.)

This paper was motivated by a desire to understand how Colding's monotonicity formulas might be used to establish the existence and quantitatively control the regularity of almost splitting functions by the pinching of said monotone quantities. Indeed, since $b_x$ is asymptotic to (a multiple of) the distance $d_x$ from $x$, it seems natural to presume that a splitting function could be built from $b_{x_1}^2-b_{x_2}^2$ where $x_1, x_2$ are two distinct points, and that the function should be controlled by monotone quantities involving $b_{x_1}, b_{x_2}$. This is precisely what we do in Theorems \ref{thm:k splitting} and Theorem \ref{thm:k splitting by pinching only}.  In other words, small pinching implies the existence of an almost $k$-splitting map with regularity controlled in a quantitative way.

We hope that the results here can provide an introduction into the study of splitting through Colding's monotone quantities and in the future might prove useful in improving regularity results for tangent cones at infinity for non-negative Ricci curvature.

\subsection{Main Results}

Due to the technical nature of the theorem statements, we give a less precise presentation of the statements here. For the precise statement of the results, we refer the reader to the theorem statements in section \ref{sec: splitting}.

We consider $(M^n,g)$, $n \geq 3$, a Riemannian manifold with $\mathrm{Ric} \geq 0$ and Euclidean volume growth, i.e., $\lim_{R \to \infty}\frac{Vol(B_p(R))}{R^n}=:\tv>0$. The monotone quantities of interest are defined in Definition \ref{def:A_x} and agree with those in \cite{Colding} up to a renormalization of $b_x$ so that in this paper $\lim_{d_x(y) \to \infty} \frac{b_x(y)}{d_x(y)}=1$ (see Definition \ref{def:b_x}). In particular, $F_x, A_x, V_x$ are all monotone where
    \begin{align*}
     F_x(r) &:= (A_x - 2(n-1)V_x)(r),
\\  A_x(r)&:= r^{1-n}\int_{b_x = r} |\nabla b_x|^3\,dA,\\
    V_x(r)&:=r^{-n}\int_{b_x \le r} |\nabla b_x|^4 \,dV.
    \end{align*}
    We denote the corresponding pinched quantities by
    \begin{align*}
\mathcal W_{s,t}(x)&:=|A_x(t)-A_x(s)|,\\
\mathcal F_{s,t}(x)&:=|F_x(t)-F_x(s)|.
\end{align*}
We first demonstrate that both of the above pinched quantities can be used to bound the scale invariant distance to appropriate domains on a cone. To do this, we first show that the $L^2$-norm of the trace-free Hessian of $b_x^2$ is controlled on sublevel sets of $b_x$ by $\mathcal F$ (see Proposition \ref{prop:0pinching_2}). Similarly, the pinching $\mathcal W$ controls a weighted $L^2$ norm on superlevel sets (see Proposition \ref{prop:0pinching}). Next, applying results of \cite{Colding} and \cite{cm-einstein}, we bound the scale invariant distance to a ball or an annulus in a cone; that is, in either case there exist metric cones $C(X),C(Y)$ with vertex $o$ such that 
$$\left(\frac{ d_{GH}(B^{M}_{x}(s), B^{C(X)}_{o}(s) )}s\right)^{2+2\mu(n)} \le C(n,\tv) \mathcal F_{2s,4s}(x), \text{ and}$$
$$\left(\frac{d_{GH}(A^{d^M}(x, s, 2s), A^{d^{C(Y)}} (o, s, 2s))}{s} \right)^{2+\mu(n)} \le C(n,\tv)\mathcal W_{\frac{s}{4}, \frac{s}{2}}(x).$$
(See Definition \ref{def:annulus} for the definition of the annular domains of interest.)

Building on the $L^2$ control on the trace-free Hessian by $\mathcal F$ given in Proposition \ref{prop:0pinching_2} and using the uniform $C^0$-estimates we establish in Proposition \ref{prop:UniformEst}, we demonstrate initial control on our splitting functions in Theorem \ref{thm:k splitting}.  Our result can be viewed as an analogue to \cite[Proposition 6.4 or Theorem 6.1]{cjn} with the global monotonicity formulas for the Green functions replacing their local monotone quantities.

\begin{theorem}[roughly Theorem \ref{thm:k splitting}]\label{thm:k splitting intro}
Let $r>0$ and $p \in M$. If 
\begin{itemize}
\item there exist $(k+1)$-points $\{x_0, \dots, x_k \} \subset B_p(r)$ which almost span a $k$-plane and 
\item for $s \gg r$ each $B_{x_i}(s)$ is an almost metric cone, 
\end{itemize} then there exists a $k$-splitting function $u:B_p(r) \to \R^k$ given roughly by $u_j:= \frac{b_{x_j}^2-b_{x_0}^2-d(x_j,x_0)^2}{2d(x_j,x_0)}$, $j=1, \dots, k,$ such that
\begin{itemize}
\item $r^2 \fint_{\Brp}|\text{Hess}_{u}|^2 \,dV $ is controlled by \[\sum_{j=0}^k\left(\left( \frac sr\right)^n \mathcal{F}_{s,2s}(x_j)+ \fint_{\Brp} \left| \text{Hess}_{b_{x_j}^2} - \frac{\Delta b_{x_j}^2}{n}g\right|^2 b_{x_j}^{-2} \, dV\right),
\]
     \item $\fint_{\Brp}\left|\langle \nabla u_i, \nabla u_j\rangle - \delta_{ij}\right|^2 \,dV\leq C \sum_{\ell=0}^{k} \mathcal F_{s,2s}(x_\ell)$ for all $i,j=1, \dots, k$, 
     \item $\sup_{\Brp}|\nabla u_i|\leq C$ for all $i = 1, \dots, k$.
\end{itemize}
\end{theorem}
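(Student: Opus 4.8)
The plan is to build the candidate splitting map $u = (u_1,\dots,u_k)$ with $u_j = \frac{b_{x_j}^2 - b_{x_0}^2 - d(x_j,x_0)^2}{2d(x_j,x_0)}$ and prove each of the three bullet points in turn, leveraging the pointwise identity that relates $\mathrm{Hess}_{u_j}$ to the trace-free Hessians of $b_{x_j}^2$ and $b_{x_0}^2$. For the first bullet, I would start from $\mathrm{Hess}_{u_j} = \frac{1}{2d(x_j,x_0)}(\mathrm{Hess}_{b_{x_j}^2} - \mathrm{Hess}_{b_{x_0}^2})$, split each Hessian into its trace-free part plus $\frac{\Delta b_{x_i}^2}{n}g$, and then control the trace terms $\Delta b_{x_j}^2 - \Delta b_{x_0}^2$. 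This is where Colding's identities come in: $\Delta b_x^2 = 2n|\nabla b_x|^2$ pointwise is not available, but the weighted integral quantities $A_x, V_x$ packaged into $\mathcal F_{s,t}(x)$ do control $\fint_{\Brp}|\Delta b_{x_j}^2 - 2n|\nabla b_{x_j}|^2|$-type expressions after one integrates the monotonicity defect from scale $r$ out to $s$; this is precisely the content of Proposition \ref{prop:0pinching_2} (the $L^2$ control of the trace-free Hessian on sublevel sets) combined with the $C^0$ estimates of Proposition \ref{prop:UniformEst}, which let me transfer a bound on the sublevel set $\{b_{x_j} \le C_0 s\}$ down to $\Brp$ at the cost of the volume factor $(s/r)^n$. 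So the first bullet should follow by the triangle inequality in $L^2$, applying Proposition \ref{prop:0pinching_2} once per pole $x_j$ and once for the base pole $x_0$, plus absorbing the $\fint_{\Brp}|\mathrm{Hess}_{b_{x_j}^2} - \frac{\Delta b_{x_j}^2}{n}g|^2 b_{x_j}^{-2}$ terms directly since they already appear on the right-hand side.

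For the second bullet, the near-orthonormality $\fint_{\Brp}|\langle \nabla u_i, \nabla u_j\rangle - \delta_{ij}|^2$, the strategy is to first establish it at one point and then propagate via the Hessian control just obtained. Concretely, $\langle \nabla u_i, \nabla u_j\rangle$ differs from a constant by an amount whose gradient is bounded by $|\mathrm{Hess}_{u_i}||\nabla u_j| + |\mathrm{Hess}_{u_j}||\nabla u_i|$, so a Poincaré-type inequality on $\Brp$ reduces the $L^2$ oscillation to the Hessian integral from the first bullet. The remaining task is to pin down the constant: here I would use that $b_{x_j}$ is asymptotic to $d_{x_j}$ together with the almost-cone hypothesis at scale $s$ and the fact that the $x_i$ almost span a $k$-plane — at the cone scale, the functions $b_{x_j}^2 - b_{x_0}^2$ behave like the linear coordinate functions $2d(x_j,x_0)\langle \cdot - x_0, \frac{x_j - x_0}{|x_j-x_0|}\rangle$, and the Gram matrix of the normalized directions $\frac{x_j-x_0}{d(x_j,x_0)}$ is within $C\sum \mathcal F_{s,2s}(x_\ell)$ of the identity because the points almost span a $k$-plane. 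Making this quantitative requires invoking the Gromov-Hausdorff cone approximation controlled by $\mathcal F$ from the displayed estimates in the introduction.

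For the third bullet, $\sup_{\Brp}|\nabla u_i| \le C$, I would use the $C^0$ (and hence gradient) estimates of Proposition \ref{prop:UniformEst}: since $|\nabla b_{x_j}|$ is uniformly bounded above on $M$ (a consequence of the Laplacian comparison and the normalization $\lim |\nabla b_x| = 1$ at infinity for $\mathrm{Ric}\ge 0$ with Euclidean volume growth) and $|\nabla u_j| \le \frac{1}{2d(x_j,x_0)}(|\nabla b_{x_j}^2| + |\nabla b_{x_0}^2|) = \frac{1}{d(x_j,x_0)}(b_{x_j}|\nabla b_{x_j}| + b_{x_0}|\nabla b_{x_0}|)$, on $\Brp$ with $s \gg r$ we have $b_{x_j} \le C d(x_j, \cdot) \le C(d(x_j,x_0) + r) \le C' d(x_j,x_0)$, so the ratio is bounded by an absolute constant depending only on $n$ and $\tv$.

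The main obstacle I anticipate is the second bullet, specifically isolating the \emph{correct additive constant} in $\langle\nabla u_i,\nabla u_j\rangle$ and showing it is $\delta_{ij}$ up to the claimed error. The Hessian control gives smallness of oscillation cheaply, but identifying the mean value requires comparing the geometry of $B_{x_i}(s)$ to a genuine cone $\R^k \times C(X)$ and verifying that in that model the functions $b_{x_j}^2$ restrict to affine functions with the expected linear parts — this couples the single-pole cone approximations at the $k+1$ different points, which a priori see different cones $C(X_j)$, and one must use the almost-$k$-plane hypothesis on $\{x_0,\dots,x_k\}$ to force a common $\R^k$ factor. Controlling the interaction between distinct poles, rather than any one estimate in isolation, is the delicate part; the rest is a sequence of triangle inequalities, Poincaré inequalities, and bookkeeping with the volume factors $(s/r)^n$.
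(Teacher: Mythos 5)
Your overall scaffolding (split $\mathrm{Hess}_{u_j}$ into trace-free plus trace, feed the trace-free part into Proposition~\ref{prop:0pinching_2}, use Proposition~\ref{prop:UniformEst} for uniform estimates, and bound $|\nabla u_j|$ pointwise via the $C^0$ comparison $b_x\approx d_x$) matches the structure of the paper's argument, and your treatment of the third bullet is essentially correct. However there are two substantive problems.

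First, in item (1) you assert that ``$\Delta b_x^2 = 2n|\nabla b_x|^2$ pointwise is not available.'' It is available: since $G_x = \underline b_x^{\,2-n}$ is harmonic off the pole, a direct computation gives $\Delta(b_x^2) = 2n|\nabla b_x|^2$ pointwise, and the paper uses this identity explicitly. The actual difficulty with the trace part is not that the identity fails but that $|\nabla b_{x_j}|^2$ and $|\nabla b_{x_0}|^2$ oscillate around possibly different constants near $1$ on $B_p(r)$. The paper handles this by renormalizing to $\tilde b_x^2 := b_x^2/c_{r,x}$ with $c_{r,x}=\fint_{B_p(r)}|\nabla b_x|^2$; then each renormalized trace term has mean exactly $2$ on $B_p(r)$, and the oscillation of $|\nabla b_x|^2/c_{r,x}$ is controlled by a Neumann--Poincar\'e inequality in terms of $\fint|\nabla(|\nabla b_x|^2)|^2 \le \fint\Hbx^2 b_x^{-2}|\nabla b_x|^2$ (this is Proposition~\ref{prop:1pinching}(2)), which is precisely the weighted Hessian term in the stated estimate. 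Your plan omits this normalization and hence misses the mechanism that produces that term.

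Second, and more seriously, your approach to the near-orthonormality in item (2) does not work. You claim that ``the Gram matrix of the normalized directions $\frac{x_j-x_0}{d(x_j,x_0)}$ is within $C\sum \mathcal F_{s,2s}(x_\ell)$ of the identity because the points almost span a $k$-plane.'' This is false: $(k,\alpha)$-independence is a nondegeneracy condition (the points are quantitatively not contained in a lower-dimensional affine set); it says nothing about the directions being close to orthonormal, and for generic $(k,\alpha)$-independent configurations the Gram matrix is far from the identity. Consequently the naive functions $\tilde u_j$ need not satisfy the bullet with any small right-hand side. The paper resolves this by applying a \emph{lower-triangular} linear transformation $A$, with $|A|\le C(n,\mathrm v,\alpha)$, to the $\tilde u_j$ --- a Gram--Schmidt step --- and then sets $u := A\tilde u$. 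The existence and uniform boundedness of $A$ is established by a compactness--contradiction argument (following \cite[Lemma 6.7]{cjn}): a sequence violating the claim would limit to a cone $\R^k\times C(Y)$ where the $\tilde u_j$ become genuine linear coordinates with a nondegenerate but not orthonormal Gram matrix, which can always be made orthonormal by a bounded lower-triangular change of basis, and $W^{1,2}$-convergence transfers this back. This linear change of basis is the missing key idea in your proposal; without it item~(2) cannot be reached by Poincar\'e plus GH cone approximation alone, both because the constant is wrong and because the Hessian bound from item~(1) carries the extra weighted term rather than just $\sum\mathcal F$.
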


If the pinching of $\mathcal F_{s,2s}(x_j)$ is sufficiently small relative to the scale ratio $s/r$ and the trace-free Hessian on $B_p(2r)$, we can say more. Indeed combining Theorem \ref{thm:k splitting} with Proposition \ref{prop:pinching bound}, which controls the weighted Hessian term in item (1) above by the pinching, we immediately obtain the following bound on the splitting functions at scale $r$ by the pinching $\mathcal F$ at scale $s$.

\begin{theorem}[roughly Theorem \ref{thm:k splitting by pinching only}]\label{thm:k splitting by pinching only intro}
Let $r>0$ and $p \in M$. If 
\begin{itemize} 
\item there exist $(k+1)$-points $\{x_0, \dots, x_k \} \subset B_p(r)$ which almost span a $k$-plane, 
\item for $s \gg r$ each $B_{x_i}(s)$ is an almost metric cone, 
\item and $\left(\frac sr \right)^n  \mathcal F_{s,2s}(x_i)$ is sufficiently small, for all $i=0\dots k$, depending on $n$ and $\sup_{B_p(2r)} \left|\text{Hess}_{b_x^2} - \frac{\Delta b_x^2}{n}g\right|^2(x)$,
\end{itemize} then the function $u:\Brp \to \R^k$ of Theorem \ref{thm:k splitting intro} satisfies the previous estimates and
\[r^2 \fint_{\Brp}|\text{Hess}_{u}|^2 \,dV \text{ is bounded from above by }\left(\frac{s}{r}\right)^n \sum_{j=0}^k\mathcal{F}_{s,2s} (x_j)^{1-2/n}.\]
\end{theorem}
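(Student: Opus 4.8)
The plan is to derive Theorem~\ref{thm:k splitting by pinching only intro} as a direct corollary of Theorem~\ref{thm:k splitting intro} by estimating the single remaining term in item~(1), namely the weighted trace-free Hessian integral $\fint_{\Brp} \Hbxi^2 b_{x_i}^{-2}\,dV$, using Proposition~\ref{prop:pinching bound}. The key observation is that Theorem~\ref{thm:k splitting intro} already bounds $r^2\fint_{\Brp}|\text{Hess}_u|^2$ by a sum of two kinds of terms: a clean pinching term $(s/r)^n\mathcal F_{s,2s}(x_j)$ and the weighted Hessian terms; all that is needed is to show the latter are themselves controlled by a (possibly worse, but still vanishing-with-pinching) power of the pinching, under the smallness hypothesis. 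So the proof should be short once the inputs are assembled.

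First I would invoke Proposition~\ref{prop:0pinching_2}, which gives $L^2$ control of the trace-free Hessian of $b_{x_i}^2$ on sublevel sets of $b_{x_i}$ by $\mathcal F$; this is the ``integrated'' bound. On its own this does not immediately give the weighted bound on $\Brp$ because of the $b_{x_i}^{-2}$ weight, which is singular near $x_i$. To handle this I would interpolate between the integrated $L^2$ bound from Proposition~\ref{prop:0pinching_2} and the uniform pointwise bound $\sup_{B_p(2r)}\Hbx^2$ that enters the hypothesis: on the region where $b_{x_i}$ is bounded below by a definite multiple of $r$, the weight $b_{x_i}^{-2}$ is comparable to $r^{-2}$ and the integrated bound suffices; on the region where $b_{x_i}$ is small (a neighborhood of $x_i$ of controlled measure, since $b_{x_i}\sim d_{x_i}$ by the uniform $C^0$ estimates of Proposition~\ref{prop:UniformEst} together with Euclidean volume growth), one uses the pointwise bound to absorb the weight, integrating $b_{x_i}^{-2}\sim d_{x_i}^{-2}$ against the volume form, which is integrable in dimension $n\ge 3$. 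This is exactly the mechanism packaged in Proposition~\ref{prop:pinching bound}, so the cleanest route is simply to cite it: it yields $\fint_{\Brp}\Hbxi^2 b_{x_i}^{-2}\,dV \le C(s/r)^n \mathcal F_{s,2s}(x_i)^{1-2/n}$ (up to harmless constants depending on $n,\tv$, and the supremum appearing in the hypothesis), where the loss of the exponent from $1$ to $1-2/n$ reflects exactly the interpolation between the $L^2$ and $L^\infty$ scales.

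Then I would combine: substituting this bound and the trivial bound $\mathcal F_{s,2s}(x_j) \le \mathcal F_{s,2s}(x_j)^{1-2/n}$ (valid once $\mathcal F_{s,2s}(x_j)\le 1$, which follows from the smallness hypothesis) into item~(1) of Theorem~\ref{thm:k splitting intro}, every term on the right-hand side is of the form $C(s/r)^n\mathcal F_{s,2s}(x_j)^{1-2/n}$, and summing over $j=0,\dots,k$ gives the claimed estimate $r^2\fint_{\Brp}|\text{Hess}_u|^2\,dV \le C(s/r)^n\sum_{j=0}^k \mathcal F_{s,2s}(x_j)^{1-2/n}$. The other two conclusions (the near-orthonormality of the $\nabla u_i$ and the uniform gradient bound) are carried over verbatim from Theorem~\ref{thm:k splitting intro}, so nothing new is needed there.

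The main obstacle is purely in the bookkeeping of the smallness threshold: one must check that the hypothesis ``$(s/r)^n\mathcal F_{s,2s}(x_i)$ sufficiently small depending on $n$ and $\sup_{B_p(2r)}\Hbx^2$'' is precisely what Proposition~\ref{prop:pinching bound} requires in order for the interpolation to close (in particular, so that the region where $b_{x_i}$ is small has small enough measure, and so that the constant absorbing $\sup\Hbx^2$ does not reintroduce a positive power of $s/r$ that overwhelms the gain). Assuming Proposition~\ref{prop:pinching bound} is stated with exactly this dependence — which the phrasing in the introduction indicates — the deduction is immediate and the theorem follows by simple substitution, with no further analysis required.
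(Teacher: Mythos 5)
Your proposal is correct and matches the paper's proof exactly: the paper states ``Combining the proposition and Theorem~\ref{thm:k splitting}, we immediately get the following precise version,'' i.e.\ Theorem~\ref{thm:k splitting by pinching only} is obtained by substituting the bound of Proposition~\ref{prop:pinching bound} for the weighted trace-free Hessian terms in item~(1) of Theorem~\ref{thm:k splitting} and carrying over items~(2) and~(3) verbatim, just as you describe. Your narrative of the interpolation mechanism inside Proposition~\ref{prop:pinching bound} (the $L^2$ bound from Proposition~\ref{prop:0pinching_2} away from $x_i$, the pointwise bound near $x_i$, and the optimization over the intermediate radius giving the exponent $1-2/n$) is also an accurate account of that proposition's proof, though the lower bound $b_x \gtrsim d_x$ used there comes from \cite{msy} rather than Proposition~\ref{prop:UniformEst}.
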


The assumption in Theorem \ref{thm:k splitting by pinching only intro} that $B_{x_i}(s)$ is almost a metric cone is in fact somewhat redundant and can be reformulated purely in terms of the pinching $\mathcal F$. Indeed, if the pinching is small at large scales, then the ball is nearly a metric cone. This relationship is explained in Remark \ref{rem:symmetry}.

\begin{remark}
The sharp estimates in \cite[Theorem 6.1]{cjn} on the splitting function by the pinching of the local pointed entropy are not only interesting in their own right, but act as a main ingredient in the proof of $k$-rectifiability of the singular $k$-strata. However the proof of the rectifiability crucially requires not just the sharp estimate, but also the harmonicity of the splitting function in many places. One key instance is the nondegeneration theorem  \cite[Theorem 8.1]{cjn}. There a telescopic estimate for harmonic functions is used, which is stronger than general estimates for $W^{1,p}$ functions. In the telescopic estimate, harmonicity allows one to get rid of the term $\langle \nabla \Delta u_i, \nabla u_i \rangle$ arising in the Bochner formula.

In general the splitting functions constructed from the Green function in this paper need not be harmonic (see also remark \ref{rmk: not harmonic}). However, provided that one has a sufficiently sharp control on integrals of $|\nabla \Delta u_i|$, then it seems very likely that similar conclusions as \cite{cjn} can be deduced using these functions. It remains an interesting question under what geometric conditions these splitting functions, or (almost equivalently) the Green function, indeed satisfies such finer estimates.
\end{remark}

Once we have the $k$-splitting functions of Theorem \ref{thm:k splitting by pinching only intro}, by a standard compactness-contradiction argument we can immediately conclude an almost splitting theorem using the map $u$.

\begin{theorem}[roughly Theorem \ref{thm:rigidity}]\label{thm:rigidity intro}
Let $r>0$ and $p \in M$. If 
\begin{itemize} 
\item there exist $(k+1)$-points $\{x_0, \dots, x_k \} \subset B_p(r)$ which almost span a $k$-plane, 
\item for $s \gg r$ each $B_{x_i}(s)$ is an almost metric cone, 
\item and $\left(\frac sr \right)^n  \mathcal F_{s,2s}(x_i)$ is sufficiently small, for all $i=0\dots k$, depending on $n$ and $\sup_{B_p(2r)} \left|\text{Hess}_{b_x^2} - \frac{\Delta b_x^2}{n}g\right|^2(x)$,
\end{itemize} then there exist $C(Y)$, the cone over a metric measure space $Y$, and a function $\mathcal U:=(u,\tilde u): B_p(r) \to \R^k \times C(Y)$, where $u:B_p(r) \to \R^k$ is the function from Theorem \ref{thm:k splitting by pinching only intro}, such that  
\[
d_{GH}(B_p({r/2}), \mathcal U(B_p({r/2})))<\epsilon r.
\]
\end{theorem}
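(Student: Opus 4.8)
The plan is to argue by compactness and contradiction, reducing to an exact rigidity statement on a limit space. Suppose the conclusion fails for some fixed $\epsilon>0$. Then there are pointed manifolds $(M_i^n,g_i,p_i)$ with $\mathrm{Ric}\ge 0$ and uniform Euclidean volume growth $\tv_{M_i}\ge\tv_0>0$, radii $r_i>0$ (rescale so that $r_i=1$), scales $s_i\gg 1$, and $(k+1)$-tuples $\{x_0^i,\dots,x_k^i\}\subset B_{p_i}(1)$ satisfying the hypotheses of Theorem~\ref{thm:rigidity intro} with all relevant error terms tending to $0$; in particular, by Theorem~\ref{thm:k splitting by pinching only intro} and Proposition~\ref{prop:pinching bound}, the splitting maps $u_i:B_{p_i}(1)\to\mathbb R^k$ of Theorem~\ref{thm:k splitting intro} satisfy $\sup_{B_{p_i}(1)}|\nabla u_i^a|\le C$ uniformly, while the $L^2$ averages over $B_{p_i}(1)$ of $|\mathrm{Hess}_{u_i}|$, of $\langle\nabla u_i^a,\nabla u_i^b\rangle-\delta_{ab}$, and of the weighted trace-free Hessians $\left|\mathrm{Hess}_{b_{x_j^i}^2}-\tfrac{\Delta b_{x_j^i}^2}{n}g\right|b_{x_j^i}^{-1}$ all tend to $0$; and yet, for every metric cone $C(Y)$ and every $\mathcal U_i=(u_i,\tilde u_i)$, one has $d_{GH}(B_{p_i}(1/2),\mathcal U_i(B_{p_i}(1/2)))\ge\epsilon$. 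Passing to a subsequence, $(M_i,g_i,p_i)\to(M_\infty,d_\infty,p_\infty)$ in the pointed measured Gromov--Hausdorff topology, with $M_\infty$ a (non-collapsed) Ricci limit space of nonnegative Ricci curvature and Euclidean volume growth, and $x_j^i\to x_j^\infty\in\overline{B_{p_\infty}(1)}$; the poles stay separated because of the span hypothesis, so the $b_{x_j^\infty}$ are well defined.

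Next I would pass the Green-function data to the limit. Using the uniform $C^0$ estimates of Proposition~\ref{prop:UniformEst}, interior elliptic estimates, and the stability of the Green function under measured Gromov--Hausdorff convergence, $b_{x_j^i}\to b_{x_j^\infty}$ locally uniformly and in $W^{1,2}_{loc}$, where $b_{x_j^\infty}$ is the correspondingly normalized power of the Green function on $M_\infty$ with pole at $x_j^\infty$; hence $u_i\to u_\infty$, with $u_\infty$ built from the $b_{x_j^\infty}$ as in Theorem~\ref{thm:k splitting intro}. Letting $i\to\infty$ in the integral bounds gives, in the weak sense available on Ricci limit spaces, $\mathrm{Hess}_{u_\infty}\equiv 0$ and $\mathrm{Hess}_{b_{x_j^\infty}^2}=\tfrac{\Delta b_{x_j^\infty}^2}{n}g$ on $B_{p_\infty}(1)$, together with $\langle\nabla u_\infty^a,\nabla u_\infty^b\rangle=\delta_{ab}$ a.e.\ and $u_\infty$ Lipschitz.

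Then I would extract the exact rigidity. By the Cheeger--Colding splitting theorem on Ricci limit spaces (\cite{cheeger-colding}, extending \cite{cg}; see also \cite{gigli-violo}), the map $u_\infty$ with vanishing Hessian and orthonormal gradients forces $B_{p_\infty}(1)$ to be isometric, measure-preservingly, to a ball in a product $\mathbb R^k\times N$ under which $u_\infty$ becomes the coordinate projection up to an isometry of $\mathbb R^k$; since $b_{x_0^\infty}(x_0^\infty)=0$ one has $u_\infty(x_0^\infty)=0$, so $x_0^\infty$ lies over the origin of $\mathbb R^k$. Since in addition $b_{x_0^\infty}^2$ has pure-trace Hessian on $B_{p_\infty}(1)$, the equality case of Colding's monotonicity formula (\cite{Colding}) identifies $B_{p_\infty}(1)$ with a ball, centered at $x_0^\infty$, in a metric cone; compatibility of this cone structure with the $\mathbb R^k$-splitting forces the cross-section $N$ to be a metric cone $C(Y)$ with vertex the point lying over the origin, and $b_{x_0^\infty}^2=|u_\infty|^2+\rho^2$ with $\rho$ the cone radial coordinate on $C(Y)$. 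I expect this to be the main obstacle: one must reconcile the cone structure that the pinching $\mathcal F_{s,2s}$ controls at the large scale $s$ (through the cone-closeness estimates recalled in the introduction and Remark~\ref{rem:symmetry}) with the $\mathbb R^k$-splitting that $u$ controls at the small scale $r\ll s$, transferring near-cone structure across these scales, while keeping careful track of the fact that the available statements concern balls rather than the whole space.

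Finally, I would fix the cone $C(Y)$ just produced and let $\Phi:B_{p_\infty}(1/2)\hookrightarrow\mathbb R^k\times C(Y)$ denote the isometric embedding from the previous step, normalized so that $\mathrm{proj}_{\mathbb R^k}\circ\Phi=u_\infty$. Choosing Gromov--Hausdorff $\delta_i$-approximations $\psi_i:B_{p_i}(1/2)\to B_{p_\infty}(1/2)$ with $\delta_i\to0$, set $\tilde u_i:=\mathrm{proj}_{C(Y)}\circ\Phi\circ\psi_i$ and $\mathcal U_i:=(u_i,\tilde u_i):B_{p_i}(1/2)\to\mathbb R^k\times C(Y)$. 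Since $u_i\to u_\infty$ uniformly and $\psi_i$ is a $\delta_i$-approximation, $\mathcal U_i$ differs from $\Phi\circ\psi_i$ by $o(1)$, so $\mathcal U_i(B_{p_i}(1/2))$ lies within $o(1)$ of $\Phi(B_{p_\infty}(1/2))$; therefore
\[
d_{GH}\bigl(B_{p_i}(1/2),\,\mathcal U_i(B_{p_i}(1/2))\bigr)\;\longrightarrow\;d_{GH}\bigl(B_{p_\infty}(1/2),\,\Phi(B_{p_\infty}(1/2))\bigr)=0,
\]
since $\Phi$ is an isometric embedding. This contradicts the lower bound $\epsilon$ once $i$ is large; undoing the rescaling proves the claim.
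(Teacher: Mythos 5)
The central gap is in the sentence where you assert that along the contradiction sequence, "the $L^2$ averages over $B_{p_i}(1)$ of $|\mathrm{Hess}_{u_i}|$ \ldots and of the weighted trace-free Hessians \ldots all tend to $0$." Under the stated hypotheses this does not follow. The constraint is $(s_i/r)^n\,\mathcal F_{s_i,2s_i}(x_j^i)<\eta_0$ with $\eta_0=\eta_0(H,n)$ a \emph{fixed} threshold (it does not shrink with $\epsilon$), and item (1) of Theorem \ref{thm:k splitting by pinching only} only yields
\[
r^2\fint_{B_{p_i}(r)}|\mathrm{Hess}_{u_j^i}|^2\,dV
\;\le\;
C\Bigl[(s_i/r)^n\mathcal F_{s_i,2s_i}(x_j^i)\;+\;\bigl((s_i/r)^n\mathcal F_{s_i,2s_i}(x_j^i)\bigr)^{1-2/n}\Bigr]
\;\le\; C\bigl(\eta_0+\eta_0^{1-2/n}\bigr),
\]
a quantity that stays bounded but does not tend to zero. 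Likewise the weighted trace-free Hessian bound coming from Proposition \ref{prop:pinching bound} is controlled by $\eta_0^{1-2/n}$, not by something going to zero. So the step where you pass to a limit and declare $\mathrm{Hess}_{u_\infty}\equiv 0$ and that $b_{x_j^\infty}^2$ has pure-trace Hessian--on which the rest of your argument (the Cheeger--Colding splitting applied to $u_\infty$ and the Colding rigidity applied to $b_{x_0^\infty}^2$) hinges--is unjustified. (By contrast, item (2) does go to zero, since $\mathcal F_{s_i,2s_i}<\eta_0\,(r/s_i)^n\to 0$.)

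The route the paper has in mind (mirroring the last part of the proof of Theorem \ref{thm:k splitting}) sidesteps the Hessian entirely: use the uniform $C^0$ estimate of Proposition \ref{prop:UniformEst} together with the $(k,\alpha)$-independence lower bound on $d(x_0^i,x_j^i)$ to pass $u^i$ to a uniform limit $u_\infty$ whose components are \emph{explicitly}
\[
u_j^\infty=\frac{d_\infty^2(x_j^\infty,\cdot)-d_\infty^2(x_0^\infty,\cdot)-d_\infty^2(x_0^\infty,x_j^\infty)}{2\,d_\infty(x_0^\infty,x_j^\infty)}.
\]
Then argue (from the $(0,\delta_i)$-symmetry with $\delta_i\to 0$, $s_i\to\infty$, as in the compactness argument of Proposition \ref{prop:UniformEst}) that each $x_j^\infty$ is a cone vertex of $M_\infty$, so by cone-splitting applied to the $(k,\alpha)$-independent vertices, $M_\infty=\R^k\times C(Y)$ with $x_j^\infty\in\R^k\times\{o\}$. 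On such a space the explicit $u_j^\infty$ are linear in the $\R^k$ factor, and your final step--building $\tilde u_i$ from GH approximations to the projection onto $C(Y)$--then goes through. Your proposal identifies the scale-reconciliation as "the main obstacle," which is fair, but the more concrete flaw is the unjustified vanishing of the Hessian; the $C^0$ route avoids it.
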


\subsection{Why $F_x$?}
An explanation of why we opt to work with $F_x=A_x-2(n-1)V_x$ to get estimates on our splitting functions, rather than $A_x$ or even $V_x$, is in order. First, while ${V}_x$ itself is monotone nonincreasing (\cite[Corollary 2.19]{Colding}), this follows from observing that $0 \le {A}_x' \le 2(n-1){V}_x'$. The direct calculation of ${V}_x'$ does not seem to have an expression like (\ref{underline A'}). But the question remains, why not use $A_x$ to get estimates on our splitting functions? There are two distinct reasons for this, both relating to the fact that $\mathcal W_{s,2s}(x):=|A_x(2s)-A_x(s)|$ only controls the trace free Hessian of $b_x^2$ on a superlevel set of $b_x$ rather than a sublevel set (see Lemma \ref{lem:Aprime}).

Recall that our goal is to produce splitting functions using the functions $b_x$, and to do this we will need uniform estimates on $b_x$ in the domain of the splitting functions. The necessary uniform estimates come from Proposition \ref{prop:UniformEst}, and in its proof two things are required. First, we need a fixed inner scale $r$ on which we will get our estimates and a much larger scale $s$ on which the almost $0$-symmetry holds. By Corollary \ref{cor:symmetry} and Remark \ref{rem:symmetry}, pinching of $F_x$ implies almost $0$-symmetry on a ball whereas pinching of $A_x$ implies almost $0$-symmetry on an annulus. So a result like Corollary \ref{cor:uniform est} cannot hold by assuming $\mathcal W_{s,2s}(x)<\delta$ since the limiting objects in the proof of Proposition \ref{prop:UniformEst} might not be cones all the way down to the vertex but instead will be conical only on an annulus at scale $s$.

Second, and perhaps more importantly, even if we added the hypothesis of $(0,\delta)$-symmetry for $B_x(s)$ and try to get estimates like those of Theorem \ref{thm:k splitting} with $\mathcal W_{s,2s}(x)$ replacing $\mathcal F_{s,2s}(x):=|F_{x}(2s)-F_{x}(s)|$, the two scales $r,s$ pose a problem. The pinching estimate of Proposition \ref{prop:0pinching_2} is conducive to these two scales since clearly $B_p(r) \subset B_{x}(s)$ for $s$ much larger than $r$. On the other hand, Proposition \ref{prop:0pinching} cannot be modified to have two different scales since, for example $A^b(p,r,2r)\not\subset A^b(p,s,2s)$.

\subsection{Outline of the paper}
This paper is organized as follows. In section \ref{sec: prelim} we define the relevant terms, recall a necessary Poincar\'e inequality, and describe our scaling convention for the Green function. In section \ref{sec: cone}  we first prove that small $\mathcal W$-pinching controls the scale invariant distance of an annulus to an annulus in a cone (Proposition \ref{prop:0pinching} and Corollary \ref{cor:distance to cone}). Next, we prove that small $\mathcal F$-pinching implies almost 0-symmetry (Proposition \ref{prop:0pinching_2} and Corollary \ref{cor:symmetry}). We then establish crucial uniform estimates on $b$ on small balls when large balls are almost 0-symmetric (Proposition \ref{prop:UniformEst} and Corollary \ref{cor:uniform est}). Finally, in section \ref{sec: splitting} we prove our main results. First, we show that small $\mathcal F$-pinching at $(k,\alpha)$-independent points implies the almost $k$-splitting of Theorem \ref{thm:k splitting intro} (cf. Theorem \ref{thm:k splitting}). We then prove that small $\mathcal F$-pinching relative to the scale and the trace-free Hessian on a much smaller scale implies $\mathcal F$-pinching also controls a weighted $L^2$-norm on the trace-free Hessian (cf. Proposition \ref{prop:pinching bound}). Theorems \ref{thm:k splitting by pinching only intro} and \ref{thm:rigidity intro} are then stated precisely (cf. Theorems \ref{thm:k splitting by pinching only} and \ref{thm:rigidity}), the results of which are immediate.

\section{Preliminaries} \label{sec: prelim}

For the rest of this paper, we presume that $(M^n,g)$ satisfies $\mathrm{Ric}\geq 0$ and $n \geq 3$. We use the standard notation for balls, i.e. for $x \in M$ and $d_x$ the distance from $x$,
\[
B_x(r):=\{y \in M: d_x(y)<r\}.
\] We also presume that
\[
 \lim_{R \to \infty}\frac{Vol(B_p(R))}{R^n}=:\tv\geq \mathrm v
\]
for a fixed constant $\mathrm v>0$. Note that there are many propositions where the uniform lower bound $\mathrm v$ on $\tv$ is unnecessary and we only require $\tv>0$, but in this case we may just consider $\mathrm v=\tv$. Note also that if one has $\mathrm v_1 \geq \mathrm v_2$, then any constants that arise depending on the volume growth can be taken so that $C(\mathrm v_1) \leq C(\mathrm v_2)$.

\subsection{Quantitative definitions}

Following the standard in the literature, we define quantitative independence and quantitative symmetry. Notice that by definition both are scale invariant quantities.
\begin{definition}
Let $(X,d)$ be a metric space and let $\alpha>0$. Fix a ball $B_{x}(r) \subset X$. A set of points $U=\{x_0, \dots, x_k\}\subset B_{x}(r)$ is called \emph{$(k, \alpha)$-independent} if for any $U'=\{x_0', \dots, x_k'\} \subset \R^{k-1}$,
\[
r^{-1}d_{GH}(U,U') \geq \alpha.
\]
\end{definition}
\begin{definition}
Let $(X,d)$ be a metric space and let $\epsilon>0$. We say a ball $B_x(r)\subset X$ is \emph{$(k, \epsilon)$-symmetric} if there exists a $k$-symmetric metric cone $X'=\R^k \times C(Z)$ with vertex $x'$ such that
\[
r^{-1}d_{GH}(B_x^X(r), B_{x'}^{X'}(r))< \epsilon.
\]
\end{definition}
\subsection{Poincar\'e inequality}

The following result is immediate from \cite[Theorem 1]{hk}. 

\begin{lemma}[Neumann Poincar\'e inequality]\label{lem:NPi}
   Let $\Omega \subset M$ be smooth and connected with smooth boundary. There exist $k>1$ and $C=C(n,\tv)>0$ so that if $u \in W^{1,2}(\Omega)$, then
    \begin{equation}
       \left( \fint_\Omega |u-\fint_\Omega u|^{2k} \, dV\right)^{1/k}\leq C (\mathrm{diam}(\Omega))^2 \fint_\Omega |\nabla u|^2\, dV. 
  \end{equation}
\end{lemma}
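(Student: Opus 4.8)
## Proof plan for the Neumann–Poincaré inequality

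The plan is to cite the self-improving Poincaré inequality of Hajłasz–Koskela \cite[Theorem 1]{hk} and verify that its hypotheses hold for $(M^n,g)$ with $\mathrm{Ric}\geq 0$ and Euclidean volume growth bounded below. First I would recall what that theorem requires: a metric measure space supporting a (weak) $(1,2)$-Poincaré inequality together with a doubling measure, and then the conclusion is the $(2k,2)$-Sobolev–Poincaré inequality stated here, with the gain exponent $k>1$ depending only on the doubling constant and the Poincaré constant. So the proof is really the observation that manifolds with $\mathrm{Ric}\geq 0$ furnish exactly this input data, uniformly in terms of $n$ and the volume ratio $\tv$.

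The key steps, in order, are as follows. (1) By the Bishop–Gromov comparison theorem, $\mathrm{Ric}\geq 0$ gives that the Riemannian volume is globally doubling with doubling constant $2^n$; this is scale-independent and requires no lower volume bound. (2) By the Buser inequality (or Cheeger–Colding / Saloff-Coste), $\mathrm{Ric}\geq 0$ implies a uniform weak $(1,1)$- and hence $(1,2)$-Poincaré inequality on all balls, again with constants depending only on $n$. (3) Feed these two facts into \cite[Theorem 1]{hk} to obtain, on any ball and in particular on $\Omega$ via a chaining/extension argument for the connected domain, the self-improved estimate
\[
\left(\fint_\Omega \Big|u-\fint_\Omega u\Big|^{2k}\,dV\right)^{1/k}\leq C(n)\,(\mathrm{diam}\,\Omega)^2\fint_\Omega |\nabla u|^2\,dV .
\]
(4) Track where the lower volume bound $\tv\geq \mathrm v$ enters: it is needed only to compare the measure of $\Omega$ with a Euclidean-type lower bound when passing from ball statements to the general domain $\Omega$, so the final constant is written as $C(n,\tv)$, consistent with the convention set just above that $C(\mathrm v_1)\leq C(\mathrm v_2)$ when $\mathrm v_1\geq\mathrm v_2$.

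The main obstacle is step (3): \cite[Theorem 1]{hk} is naturally stated for balls (or for domains satisfying a chain condition), whereas here $\Omega$ is only assumed smooth and connected. I would handle this either by invoking the version of the self-improving theorem valid on John domains / domains with the chain property — noting that a smooth connected $\Omega$ with smooth boundary satisfies such a condition with constants controlled by its geometry — or, more cleanly, by absorbing the discrepancy into the diameter factor: one covers $\Omega$ by a controlled number of balls of radius comparable to $\mathrm{diam}(\Omega)$, applies the ball version on each, and chains the local averages together using connectedness, which is exactly the mechanism \cite{hk} uses internally. In either route the dependence of the constants stays confined to $n$ and $\tv$, which is all that is claimed. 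No sharpness in $k$ is asserted, so I would not attempt to optimize the gain exponent; any $k>1$ produced by the doubling and Poincaré constants suffices for the applications in later sections.
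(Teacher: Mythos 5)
Your proposal is correct and follows essentially the same route as the paper: doubling from Bishop--Gromov, a $(1,1)$-Poincar\'e from Buser (upgraded to $(1,2)$ by Cauchy--Schwarz), and then \cite[Theorem 1]{hk}, with the passage from balls to the domain $\Omega$ handled by the chain condition that \cite{hk} calls $C(1,m)$. The paper's own proof is a terser version of exactly this argument.
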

In our setting, when $\Omega$ is smooth and connected it immediately satisfies the so called ``$C(1,m)$" condition of \cite{hk}. Buser's inequality \cite{buser} or \cite{cheeger-colding}, using the segment inequality, imply that for any ball $B \subset \Omega$ of radius $r$,
\[
\fint_B|u-\fint_B u|\, dV \leq C(n) r \fint_B |\nabla u|\,dV.
\]As Cauchy-Schwarz implies
\[
\fint_B |u-\fint_B u|\,dV \leq C(n) r \left(\fint_B |\nabla u|^2\,dV\right)^{1/2},
\]all of the hypotheses of \cite[Theorem 1]{hk} are immediately satisfied and the result follows.

\subsection{Green functions, scaling conventions}\label{sec:greens_scaling} Following \cite{cm-schroedinger}, we let
\begin{equation}\label{eq:binfty}
    b_\infty:=\left( \frac{\tv}{\omega_n}\right)^{1/(n-2)}
\end{equation}
where $\omega_n$ denotes the volume of the unit ball in $\mathbb R^n$.  
As all spaces we consider are of Euclidean volume growth, we can choose the Green function to be the unique minimal positive one, normalized in the same way as \cite{cm-schroedinger}, where the existence and uniqueness of this function follows from Li-Tam theory (see for example \cite{LT}).
To clarify the different normalization constants across the literature, note that
\cite{cm-schroedinger} defines the Green function (which we denote $G^{CM}_x$) to satisfy 
\[
\Delta G^{CM}_x:= -n(n-2)\omega_n \delta_x,
\]while \cite{ding} defines the Green function (which we denote $G^D_x$) so that
\[
\Delta G_x^D:=-\delta_x.
\]Therefore
\[
G_x^{CM}= n(n-2) \omega_n G_x^D.
\]
In Proposition \ref{prop:UniformEst}, we will need \cite[Corollary 4.22]{ding} which says that for a limit cone of a sequence $(M_i,g_i,p_i)$ with $\text{Ric}_{M_i} \geq 0$ and $\mathrm v_{M_i}\geq\mathrm v>0$, there exists a unique Green function on this limit cone $C(Y)$ such that
\[
G^D_\infty(p_\infty,x)= \frac{d_\infty^{2-n}(p_\infty,x)}{(n-2)\mu_Y(Y)}.
\]Here $p_\infty$ is the vertex of the limit cone and $\mu_Y$ is the natural measure induced by restricting the limit measure to $Y$. It will be helpful to note that Ding's result and the scaling relations imply that
\[
G^{CM}_\infty(p_\infty,x)= \frac{n \omega_n}{\mu_Y(Y)}{d_\infty^{2-n}(p_\infty,x)},
\]and since $\mu_Y(Y)= n\lim_{i \to \infty}\mathrm v_{M_i}$, 
\[
G^{CM}_\infty(p_\infty,x)= b_\infty^{2-n} d_\infty^{2-n}(p_\infty,x).
\]

\begin{definition} \label{def:b_x}

Let $$ b_x:= b_\infty^{-1} \underline b_x:M \to \R$$ where $\underline b_x(\cdot):= G^{CM}_x(\cdot)^{\frac 1{2-n}}$. 
\end{definition}
By \cite{cm-schroedinger}, 
\begin{equation}\label{asymp:G}
 \lim_{d(x,y) \to \infty}
\left|\frac{G_x^{CM}(y)}{\left({b_\infty}d(x,y)\right)^{2-n}} - 1\right| = 0.   
\end{equation} 
So (\ref{asymp:G}) implies that for $b_x$ as in Definition \ref{def:b_x},
\begin{equation} \label{eqn: CM b conv}
\lim_{d(x,y) \to \infty} \left|\frac{b_x(y)}{d(x,y)} - 1\right| = 0.
\end{equation}
In particular, $b_x = d_x:=d(x,\cdot)$ on a cone with vertex $x$.

Note that \cite[Theorem 3.1]{Colding} demonstrates the pointwise gradient estimate that $|\nabla \underline{b}_x| \leq 1$ with equality at a point implying isometry to the Euclidean space. For $b_x$, this becomes
\begin{equation}\label{eq:bgradbound}
    |\nabla b_x| \le b_\infty^{-1}. 
\end{equation}

For convenience we use the following notation for annuli defined in terms of level sets of $b_x$ and $d_x$, respectively.
\begin{definition}\label{def:annulus}Define
$$A^b(x,a,c):= \{y \in M: a \leq b_x(y) \leq c\},$$ 
and\[
A^d(x,a,c):=\{y \in M: a \leq d_x(y) \leq c\}.
\]

\end{definition}

Finally we compare sublevel sets of $b_x$ and $d_p$ for $d(x,p) <r$. 

\begin{lemma}\label{lem:subball}For any $r>0$ and $x \in B_p(r)$, 
\begin{equation}\label{eq:subball}
B_p(r) \subset \{b_x \le 2b_\infty^{-1}r\}.
   \end{equation}
\end{lemma}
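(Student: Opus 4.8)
The plan is to use that $b_x$ vanishes at its pole $x$, is globally $b_\infty^{-1}$-Lipschitz, and then apply the triangle inequality. First I would record that $b_x = b_\infty^{-1}(G_x^{CM})^{1/(2-n)}$ is smooth and positive on $M\setminus\{x\}$ (since $G_x^{CM}$ is) and that $b_x(x)=0$: near the pole the Green function has the Euclidean blow-up $G_x^{CM}(y)\to\infty$ as $y\to x$, so $(G_x^{CM}(y))^{1/(2-n)}\to 0$ because $2-n<0$, and hence $b_x$ extends continuously across $x$ with value $0$.

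Next I would upgrade the pointwise bound \eqref{eq:bgradbound}, $|\nabla b_x|\le b_\infty^{-1}$ on $M\setminus\{x\}$, to the global statement $|b_x(y)-b_x(z)|\le b_\infty^{-1}d(y,z)$ for all $y,z\in M$. For $y,z\ne x$ this follows by integrating $\tfrac{d}{dt}b_x(\gamma(t))=\langle\nabla b_x,\gamma'(t)\rangle$ along a minimizing geodesic $\gamma$ from $y$ to $z$; the case involving $x$ then follows from the continuity above (or directly, by integrating along a minimizing geodesic emanating from $x$ and letting the initial parameter tend to $0$, using $b_x\to 0$ at the pole).

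Finally, for any $y\in B_p(r)$, since also $x\in B_p(r)$ the triangle inequality gives $d(x,y)\le d(x,p)+d(p,y)<2r$, whence
\[
b_x(y)=|b_x(y)-b_x(x)|\le b_\infty^{-1}d(x,y)<2b_\infty^{-1}r,
\]
which gives $B_p(r)\subset\{b_x\le 2b_\infty^{-1}r\}$. There is essentially no obstacle; the only point needing a word of care is the behavior of $b_x$ at the pole, which is controlled by the standard local asymptotics of the Green function near $x$.
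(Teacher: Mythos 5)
Your proposal is correct and follows the same route as the paper: the gradient bound $|\nabla b_x|\le b_\infty^{-1}$ together with $b_x(x)=0$ gives $b_x(y)\le b_\infty^{-1}d(x,y)$, and then the triangle inequality finishes. The paper takes this Lipschitz estimate as read from \eqref{eq:bgradbound}, whereas you spell out the integration along geodesics and the vanishing at the pole, but the argument is the same.
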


\begin{proof} 
For $y \in \Brp$, by (\ref{eq:bgradbound}), \[b_x(y) \le b_{\infty}^{-1} d(x,y) \le b_{\infty}^{-1} \left( d(x,p) + d(p,y) \right) \le 2b_\infty^{-1}r.\]
\end{proof}

\section{Estimates on the almost cone-splitting function $b$} \label{sec: cone}

\subsection{Pinched quantities}
We begin this section by describing the monotone quantities of interest and set the definition for the \emph{pinching}.

\begin{definition} \label{def:A_x}
    Following \cite{Colding}, but using our new normalization for $b_x$, we let
    \[
    A_x(r):= r^{1-n}\int_{b_x = r} |\nabla b_x|^3\,dA,
    \]
      \[
    V_x(r):=r^{-n}\int_{b_x \le r} |\nabla b_x|^4 \,dV,
    \]and
    \[
    F_x(r) := (A_x - 2(n-1)V_x)(r).
    \]
Finally, let
\[
\mathcal W_{s,t}(x):=|A_x(t)-A_x(s)|
\]and
\[
\mathcal F_{s,t}(x):=|F_x(t)-F_x(s)|.
\]
 
\end{definition}

For $\underline{b}_x$ as in Definition \ref{def:b_x} and $\underline{A}_x(r) := r^{1-n}\int_{\underline{b}_x = r} |\nabla \underline{b}_x|^3d\mathrm{Area}$, $\underline{V}_x(r) := r^{-n}\int_{\underline{b}_x \le r} |\nabla \underline{b}_x|^4 dV$, Colding proves \cite[Corollary 2.21 and Theorem 2.4]{Colding}
\begin{equation} \label{underline A'}
  \underline{A}_x'(r)= -\frac{r^{n-3}}2\int_{\underline{b}_x \geq r} \left(\Hubx^2 + \text{Ric}(\nabla \underline{b}_x^2, \nabla \underline{b}_x^2)\right) \underline{b}_x^{2-2n}\, dV,
\end{equation}
and
\begin{equation} \label{underline F'}
\left(\underline{A}_x-2(n-1)\underline{V}_x\right)'(r) = \frac{r^{-1-n}}{2} \int_{\underline{b}_x \leq r} \left(\Hubx^2 + \text{Ric}(\nabla \underline{b}_x^2, \nabla \underline{b}_x^2)\right) \, dV.    
\end{equation}

With our normalization $b_x = b_\infty^{-1} \underline{b}_x$, (\ref{underline A'}) and (\ref{underline F'}) translate into

\begin{lemma}[Monotonicity of $A$ and $F$]\label{lem:Aprime}
    \[
   A_x'(r)= -\frac{r^{n-3}}2\int_{b_x \geq r} \left(\Hbx^2 + \mathrm{Ric}(\nabla b_x^2, \nabla b_x^2)\right) b_x^{2-2n}\, dV,
    \] 
    \[
   F_x'(r)= \frac{r^{-1-n}}{2} \int_{b_x \leq r} \left(\Hbx^2 + \mathrm{Ric}(\nabla b_x^2, \nabla b_x^2)\right) \, dV.
    \]
\end{lemma}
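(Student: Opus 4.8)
The plan is to deduce Lemma~\ref{lem:Aprime} directly from Colding's formulas \eqref{underline A'} and \eqref{underline F'} by tracking how each quantity transforms under the rescaling $b_x = b_\infty^{-1}\underline b_x$, i.e.\ $\underline b_x = b_\infty b_x$, together with the observation that $\underline A_x$ and $\underline V_x$ (hence $\underline F_x := \underline A_x - 2(n-1)\underline V_x$) are \emph{scale-invariant} in the sense that a uniform dilation of the defining function does not change them. Concretely, I would first record the elementary substitution identities: since $\underline b_x = b_\infty b_x$ is a constant multiple of $b_x$, we have $\{\underline b_x = r\} = \{b_x = b_\infty^{-1}r\}$ and $\{\underline b_x \le r\} = \{b_x \le b_\infty^{-1}r\}$; moreover $|\nabla \underline b_x| = b_\infty |\nabla b_x|$, $\nabla \underline b_x^2 = b_\infty^2 \nabla b_x^2$, and $\mathrm{Hess}_{\underline b_x^2} - \tfrac{\Delta \underline b_x^2}{n} g = b_\infty^2\big(\mathrm{Hess}_{b_x^2} - \tfrac{\Delta b_x^2}{n} g\big)$, so $\Hubx^2 = b_\infty^4\,\Hbx^2$ and $\mathrm{Ric}(\nabla \underline b_x^2, \nabla \underline b_x^2) = b_\infty^4\,\mathrm{Ric}(\nabla b_x^2,\nabla b_x^2)$.

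Next I would verify that $\underline A_x(r) = A_x(b_\infty^{-1}r)$ and $\underline V_x(r) = V_x(b_\infty^{-1}r)$ by plugging these substitutions into the definitions in Definition~\ref{def:A_x}: for instance $\underline A_x(r) = r^{1-n}\int_{\underline b_x = r}|\nabla \underline b_x|^3\,d\mathrm{Area} = r^{1-n}\int_{b_x = b_\infty^{-1}r} b_\infty^3|\nabla b_x|^3\,d\mathrm{Area} = (b_\infty^{-1}r)^{1-n}\int_{b_x = b_\infty^{-1}r}|\nabla b_x|^3\,d\mathrm{Area} = A_x(b_\infty^{-1}r)$, where the powers of $b_\infty$ balance because $b_\infty^{3}\cdot b_\infty^{\,n-1} \cdot r^{1-n} = (b_\infty^{-1}r)^{1-n}$; the computation for $V_x$ is identical with the exponent bookkeeping $b_\infty^4 \cdot b_\infty^{n} \cdot r^{-n} = (b_\infty^{-1}r)^{-n}$ (here one also uses $dV = dV$, as the metric $g$ is unchanged). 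Consequently $F_x(b_\infty^{-1}r) = \underline F_x(r)$, and differentiating the relations $A_x(b_\infty^{-1}r) = \underline A_x(r)$, $F_x(b_\infty^{-1}r) = \underline F_x(r)$ in $r$ gives $b_\infty^{-1}A_x'(b_\infty^{-1}r) = \underline A_x'(r)$ and likewise for $F_x$.

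Finally I would substitute Colding's formulas \eqref{underline A'}, \eqref{underline F'} into the right-hand sides, rewrite the integrals over $\{\underline b_x \gtrless r\}$ as integrals over $\{b_x \gtrless b_\infty^{-1}r\}$ using the substitution identities above, collect all the powers of $b_\infty$, and then replace $b_\infty^{-1}r$ by a new variable (call it $r$ again) to land on exactly the claimed expressions. For the $A$-equation: $\underline A_x'(r) = -\tfrac{r^{n-3}}{2}\int_{\underline b_x \ge r}(\Hubx^2 + \mathrm{Ric}(\nabla \underline b_x^2,\nabla \underline b_x^2))\underline b_x^{2-2n}\,dV$; the integrand becomes $b_\infty^4(\Hbx^2 + \mathrm{Ric})\cdot (b_\infty b_x)^{2-2n} = b_\infty^{6-2n}(\Hbx^2 + \mathrm{Ric})\,b_x^{2-2n}$, and combining with $b_\infty^{-1}A_x'(b_\infty^{-1}r) = \underline A_x'(r)$ and the prefactor $r^{n-3} = (b_\infty \cdot b_\infty^{-1}r)^{n-3}$, the net power of $b_\infty$ is $-1 \cdot \text{(shift to }A_x') $ versus $b_\infty^{6-2n}\cdot b_\infty^{n-3} = b_\infty^{3-n}$ from the right side—and one checks these match after writing everything at scale $b_\infty^{-1}r$, since $b_\infty^{3-n}$ is exactly the factor relating $r^{n-3}$ at the two scales together with the $b_\infty^{-1}$ from the chain rule. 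An analogous bookkeeping handles the $F$-equation, where the prefactor $r^{-1-n}$ and the absence of a $b_x^{2-2n}$ weight change only which powers appear, but the cancellation mechanism is the same. I do not expect any genuine obstacle here: the lemma is a change-of-variables restatement of results already cited, and the only thing that can go wrong is an arithmetic slip in the exponents of $b_\infty$, so the \textbf{main point of care} is simply to carry out this exponent bookkeeping consistently across both identities and confirm that all factors of $b_\infty$ cancel, leaving the clean formulas stated.
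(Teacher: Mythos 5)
Your strategy---translating Colding's formulas \eqref{underline A'} and \eqref{underline F'} into the lemma by tracking the substitution $\underline b_x = b_\infty b_x$---is precisely what the paper does (it states the lemma follows directly from the normalization, with no written proof). But your exponent bookkeeping has a concrete slip at the key step, and since you yourself flag the $b_\infty$ exponents as the only thing that can go wrong, it's worth pointing out exactly where.

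The claim $\underline A_x(r) = A_x(b_\infty^{-1}r)$, i.e.\ that $\underline A_x$ is invariant under dilating the defining function, is false for $n\neq 4$. Carrying out the substitution you set up:
\[
\underline A_x(r) \;=\; b_\infty^{3}\,r^{1-n}\!\int_{b_x = b_\infty^{-1}r}|\nabla b_x|^3\,dA
\;=\; b_\infty^{3}\cdot b_\infty^{1-n}\,(b_\infty^{-1}r)^{1-n}\!\int_{b_x = b_\infty^{-1}r}|\nabla b_x|^3\,dA
\;=\; b_\infty^{4-n}\,A_x(b_\infty^{-1}r),
\]
since $r^{1-n} = b_\infty^{1-n}(b_\infty^{-1}r)^{1-n}$. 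The ``balancing'' identity you wrote, $b_\infty^{3}\cdot b_\infty^{n-1}\cdot r^{1-n} = (b_\infty^{-1}r)^{1-n}$, forces $b_\infty^{3}=1$, which has no reason to hold. The correct relations are
\[
A_x(r) = b_\infty^{n-4}\,\underline A_x(b_\infty r),\qquad V_x(r) = b_\infty^{n-4}\,\underline V_x(b_\infty r),\qquad F_x(r) = b_\infty^{n-4}\,\underline F_x(b_\infty r),
\]
so after the chain rule $A_x'(r) = b_\infty^{n-3}\,\underline A_x'(b_\infty r)$ (not $A_x'(b_\infty^{-1}r) = b_\infty\,\underline A_x'(r)$). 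If you feed your (incorrect) relation into the rest of the computation you sketch, the net power of $b_\infty$ in the final expression for $A_x'$ is $4-n$, not $0$, so ``one checks these match'' is not actually the case. With the corrected prefactor $b_\infty^{n-3}$, it cancels exactly against the $b_\infty^{3-n}$ that you correctly extract from rewriting the integrand ($b_\infty^{6-2n}$) and the scale factor $(b_\infty r)^{n-3} = b_\infty^{n-3}r^{n-3}$, and one lands on the stated formula; the identical repair applies to $F_x'$. So the route is right, but the proof as written does not close; the spurious $b_\infty^{4-n}$ must be accounted for.
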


Using the monotonicity of $A_x$, we next deduce a quantitative bound on a weighted norm of the trace-free Hessian of $b_x^2$ on an annulus at scale $s$ in terms of $\mathcal W_{s,2s}(x)$.

\begin{proposition}\label{prop:0pinching} 
For any $x \in M$, $s>0$ and $C>2$, 
\[
\int_{b_x \geq 2s} \Hbx^2b_x^{-n} \, dV \leq 2C^{n-2}\mathcal W_{s,2s}(x).
\]In particular
\begin{equation}\label{eq:tracefreebound1}
\int_{A^b(x,2s,Cs)} \Hbx^2b_x^{-n} \, dV\leq 2C^{n-2}\mathcal W_{s,2s}(x).
\end{equation}
    \end{proposition}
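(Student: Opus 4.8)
The plan is to integrate the $A_x$-monotonicity identity of Lemma~\ref{lem:Aprime} over the scale band $[s,2s]$. Since $\mathrm{Ric}\ge 0$ and $\Hbx^2\ge 0$, that identity gives, for every $r>0$,
\[
-A_x'(r)=\frac{r^{n-3}}{2}\int_{b_x\ge r}\Bigl(\Hbx^2+\mathrm{Ric}\bigl(\nabla b_x^2,\nabla b_x^2\bigr)\Bigr)b_x^{2-2n}\,dV\ \ge\ \frac{r^{n-3}}{2}\int_{b_x\ge r}\Hbx^2\,b_x^{2-2n}\,dV\ \ge\ 0 .
\]
In particular $A_x$ is nonincreasing, so $\mathcal W_{s,2s}(x)=A_x(s)-A_x(2s)=\int_s^{2s}\bigl(-A_x'(r)\bigr)\,dr$.

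Next I would discard the $r$-dependence by crude estimates. For $r\in[s,2s]$ one has $r^{n-3}\ge s^{n-3}$ (this is where $n\ge 3$ is used) and $\{b_x\ge 2s\}\subseteq\{b_x\ge r\}$, so, the integrand being nonnegative,
\[
-A_x'(r)\ \ge\ \frac{s^{n-3}}{2}\int_{b_x\ge 2s}\Hbx^2\,b_x^{2-2n}\,dV ,
\]
a quantity free of $r$. Integrating over the length-$s$ interval $[s,2s]$ yields $\mathcal W_{s,2s}(x)\ge\frac12 s^{n-2}\int_{b_x\ge 2s}\Hbx^2 b_x^{2-2n}\,dV$, which rearranges to the superlevel-set bound
\[
\int_{b_x\ge 2s}\Hbx^2\,b_x^{2-2n}\,dV\ \le\ \frac{2}{s^{n-2}}\,\mathcal W_{s,2s}(x) ;
\]
this is the estimate on $\{b_x\ge 2s\}$, stated with the weight $b_x^{2-2n}$ that Lemma~\ref{lem:Aprime} naturally furnishes, and the form with the scale-invariant weight $b_x^{-n}$ follows on bounded annuli as follows. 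To obtain \eqref{eq:tracefreebound1}, convert weights on $A^b(x,2s,Cs)=\{2s\le b_x\le Cs\}$: there $b_x\le Cs$ and $n-2>0$, so $b_x^{-n}=b_x^{2-2n}\,b_x^{\,n-2}\le (Cs)^{n-2}\,b_x^{2-2n}$, whence, using $A^b(x,2s,Cs)\subseteq\{b_x\ge 2s\}$ and the previous display,
\[
\int_{A^b(x,2s,Cs)}\Hbx^2\,b_x^{-n}\,dV\ \le\ (Cs)^{n-2}\int_{b_x\ge 2s}\Hbx^2\,b_x^{2-2n}\,dV\ \le\ (Cs)^{n-2}\cdot\frac{2}{s^{n-2}}\,\mathcal W_{s,2s}(x)\ =\ 2C^{n-2}\,\mathcal W_{s,2s}(x).
\]

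I expect the only genuinely non-mechanical point to be this weight conversion. Lemma~\ref{lem:Aprime} carries the weight $b_x^{2-2n}$, whereas the later applications (the distance-to-cone estimates imported from \cite{Colding} and \cite{cm-einstein}) call for the scale-invariant weight $b_x^{-n}$; the two weights differ by the factor $b_x^{\,n-2}$, which is controlled uniformly only on a bounded set, and that is precisely why the bounded annulus $A^b(x,2s,Cs)$ and the factor $C^{n-2}$ must appear in the annular form. A secondary check worth making is that keeping the crude bound $r^{n-3}\ge s^{n-3}$ over $[s,2s]$ — rather than evaluating $\int_s^{2s}r^{n-3}\,dr$ exactly — is what makes the constant land on exactly $2C^{n-2}$; the exact evaluation only improves it when $n\ge 4$.
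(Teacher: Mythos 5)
Your argument is correct and essentially identical to the paper's: the paper applies the mean value theorem to $A_x$ on $[s,2s]$ (rather than integrating $-A_x'$ over the band, a purely cosmetic difference), uses $r^{n-3}\ge s^{n-3}$ together with $\{b_x\ge 2s\}\subset\{b_x\ge r\}$ to obtain $\int_{b_x\ge 2s}\Hbx^2 b_x^{2-2n}\,dV\le 2s^{2-n}\mathcal W_{s,2s}(x)$, and then passes to the annulus exactly as you do. Your observation that the weight conversion $b_x^{-n}\le (Cs)^{n-2}b_x^{2-2n}$ is only available where $b_x\le Cs$ matches what the paper's proof actually delivers, namely the $b_x^{2-2n}$-weighted superlevel-set bound and the annular estimate \eqref{eq:tracefreebound1}, which is the form used later in the paper.
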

  \begin{proof}  
By Lemma \ref{lem:Aprime} and the mean value theorem
there exists some $r \in [s, 2s]$ such that 
\[
\frac{sr^{n-3}}2 \int_{b_x \geq r} \left(\Hbx^2 + \text{Ric}(\nabla b_x^2, \nabla b_x^2)\right) b_x^{2-2n}\, dV  = \mathcal W_{s,2s}(x).
\]Since $s \leq r \leq 2s$, 
\begin{equation*}
\frac{s^{n-2}}2 \int_{b_x \geq 2s} \left(\Hbx^2 + \text{Ric}(\nabla b_x^2, \nabla b_x^2)\right) b_x^{2-2n}\, dV  \leq \mathcal W_{s,2s}(x).
\end{equation*} As $A^b(x, 2s, Cs) \subset \{b_x \geq 2s\}$, \eqref{eq:tracefreebound1} follows immediately.
\end{proof}

Combining the above with \cite[Equation 2.55]{cm-einstein}, the proof of which is outlined in \cite[Appendix C]{cm-einstein}, we confirm an expectation of Colding and Minicozzi that the pinching $\mathcal W$ of $A$ controls the scale-free distance of an annulus in $M$ to an annulus in a cone; see \cite[Equation 1.19]{cm-einstein} and the remark following. 
\begin{corollary}\label{cor:distance to cone} 
There exist $C(n,\tv)>0$, $\mu(n)>0$, and $s_0(n,\tv)>0$ so that for any $x \in M$, whenever $s>s_0$, there exists a metric cone $C(X)$ with vertex $o$ such that
$$\left(\frac{d_{GH}(A^{d^M}(x, s, 2s), A^{d^{C(X)}} (o, s, 2s))}{s} \right)^{2+\mu(n)} \le C(n,\tv)\mathcal W_{\frac{s}{4}, \frac{s}{2}}(x).$$ 
\end{corollary}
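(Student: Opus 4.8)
The plan is to combine the weighted trace-free Hessian bound from Proposition \ref{prop:0pinching}, applied at scale $s/4$ (so that the annulus $A^b(x, s/2, Cs/2)$ is controlled by $\mathcal W_{s/4, s/2}(x)$ for a suitable $C$), with the quantitative cone-closeness estimate from \cite{cm-einstein}. Recall that \cite[Equation 2.55]{cm-einstein} (outlined in \cite[Appendix C]{cm-einstein}) is precisely of the form: a scale-invariant Gromov--Hausdorff distance from a $d$-annulus in $M$ to a $d$-annulus in a cone, raised to a power $2+\mu(n)$, is bounded by a weighted $L^2$ integral of $\Hbx^2$ over a slightly larger $b_x$-annulus, times a constant depending on $n$ and the volume growth. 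So the proof is essentially a matter of bookkeeping: identifying the correct constants $C_1, C_2 > 2$ so that the $b_x$-annulus appearing in the Colding--Minicozzi estimate sits inside $A^b(x, 2\cdot(s/4), C\cdot(s/4)) = A^b(x, s/2, Cs/4)$, then invoking \eqref{eq:tracefreebound1} with $s$ replaced by $s/4$ to bound that integral by $2C^{n-2}\mathcal W_{s/4, s/2}(x)$.

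First I would state precisely the form of the Colding--Minicozzi inequality being used: there exist $\mu(n) > 0$, $s_0(n, \tv) > 0$, and constants $C_1, C_2$ (absolute, coming from how much one must ``thicken'' the annulus to run the argument) such that for $s > s_0$ there is a metric cone $C(X)$ with vertex $o$ with
\[
\left(\frac{d_{GH}(A^{d^M}(x,s,2s), A^{d^{C(X)}}(o,s,2s))}{s}\right)^{2+\mu(n)} \le C(n,\tv) \int_{A^b(x, C_1 s, C_2 s)} \Hbx^2 b_x^{-n}\, dV.
\]
Here I must be careful that the integral domain is stated in terms of $b_x$-level sets (as in \cite{cm-einstein}), which is compatible with Proposition \ref{prop:0pinching}; if the reference instead uses $d_x$-level sets, I would use \eqref{eqn: CM b conv} together with \eqref{eq:bgradbound} to pass between $d_x$- and $b_x$-annuli at the cost of adjusting $s_0$ and the constants, absorbing everything into $C(n,\tv)$. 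Second, I would apply Proposition \ref{prop:0pinching} with $s$ replaced by $s/4$ and $C := \max\{4 C_2, 3\}$ (ensuring $C > 2$ and $C \cdot (s/4) \ge C_2 s$ while $2 \cdot (s/4) = s/2 \le C_1 s$ provided $C_1 \ge 1/2$, which holds after enlarging $C_1$ if necessary), obtaining
\[
\int_{A^b(x, C_1 s, C_2 s)} \Hbx^2 b_x^{-n}\, dV \le \int_{A^b(x, s/2, Cs/4)} \Hbx^2 b_x^{-n}\, dV \le 2 C^{n-2} \mathcal W_{s/4, s/2}(x).
\]
Third, I would chain the two displays, rename the combined constant $2 C^{n-2} C(n,\tv)$ as a new $C(n,\tv)$, and enlarge $s_0$ if needed to guarantee $s/4 > 0$ suffices for Proposition \ref{prop:0pinching} (which actually holds for all $s > 0$) and $s > s_0$ suffices for the Colding--Minicozzi step; this yields exactly the claimed inequality.

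The main obstacle — really the only non-routine point — is verifying that the annulus on which \cite[Equation 2.55]{cm-einstein} controls the Gromov--Hausdorff distance is indeed a \emph{fixed} dilation of $A^d(x, s, 2s)$ with the dilation factors independent of $s$ and of the manifold, and that its defining level sets (whether of $b_x$ or $d_x$) can be sandwiched inside $\{b_x \ge 2(s/4)\}$; getting this bookkeeping right, and correctly tracking whether the reference's integrand weight is $b_x^{-n}$ or $b_x^{2-2n}$ with the Hessian of $b_x^2$ versus $b_x$, is where care is needed. Since the $\tv$-dependence of $s_0$ and of $C$ enters only through the reference's estimate and through the Poincaré-type inequalities implicit there, and the $\mu(n)$ is inherited verbatim, no new analytic input beyond Proposition \ref{prop:0pinching} and \cite{cm-einstein} is required.
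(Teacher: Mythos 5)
Your proposal is correct and takes essentially the same approach as the paper, whose own proof of this corollary is a one-line citation: combine Proposition \ref{prop:0pinching} with \cite[Equation 2.55]{cm-einstein}, exactly the reduction you reconstruct (including the weight-tracking and the containment of the Colding--Minicozzi annulus inside $A^b(x,s/2,Cs/4)$ so that the $\mathcal W_{s/4,s/2}(x)$ pinching applies). The one loose phrase is ``enlarging $C_1$ if necessary,'' since $C_1$ is dictated by the cited estimate and cannot be adjusted; rather, the choice of pinching scales $s/4,s/2$ in the corollary's statement is precisely what encodes the assumption that the cited inner annulus radius satisfies $C_1\ge 1/2$.
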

Note that the cone $C(X)$ a priori depends on $x$ and $s$, and no uniqueness of limit is declared here.

We next show that an analogue to Proposition \ref{prop:0pinching} holds using the function $F_x$ rather than $A_x$. One advantage of $F$-pinching over $A$-pinching is that $F$-pinching controls the trace-free Hessian on a ball, rather than on an annulus. (For a more detailed discussion on why $F$-pinching is advantageous in the construction of splitting functions, see comments in the Introduction.) 
\begin{proposition}\label{prop:0pinching_2} 
For any $x \in M$ and $r_2>r_1>0$,
\[
 \int_{b_x \leq r_1} \left(\Hbx^2 + \mathrm{Ric}\left(\nabla b_x^2, \nabla b_x^2\right)\right)\, dV  \leq 2r_2^{n+1}\frac{\mathcal F_{r_1,r_2}(x)}{r_2 - r_1}.
\]
In particular, for any $C>1$ and $r>0$,
\begin{equation}\label{eq:tracefreebound}
r^{-n} \int_{b_x \le r} \Hbx^2 \, dV\leq \frac{2 C^{n+1}}{C-1} \mathcal F_{r,Cr}(x).
\end{equation}
    \end{proposition}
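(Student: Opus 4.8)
The plan is to mirror the proof of Proposition~\ref{prop:0pinching}, but using the monotonicity formula for $F_x$ from Lemma~\ref{lem:Aprime} instead of the one for $A_x$. Since $F_x' (r) = \frac{r^{-1-n}}{2}\int_{b_x \le r}\left(\Hbx^2 + \mathrm{Ric}(\nabla b_x^2, \nabla b_x^2)\right)\, dV$, the integrand of interest appears directly, weighted by $\frac{r^{-1-n}}{2}$, and the relevant sublevel set $\{b_x \le r\}$ grows with $r$ — which is exactly why $F$-pinching controls a ball rather than an annulus.

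First I would apply the mean value theorem to $F_x$ on the interval $[r_1, r_2]$: since $F_x$ is monotone (nondecreasing, as $F_x' \ge 0$ because $\mathrm{Ric}\ge 0$ and the trace-free Hessian term is nonnegative), there exists $\rho \in [r_1, r_2]$ with $F_x'(\rho)(r_2 - r_1) = F_x(r_2) - F_x(r_1) = \mathcal F_{r_1, r_2}(x)$ (the absolute value being harmless by monotonicity). Substituting the expression for $F_x'(\rho)$ gives
\[
\frac{\rho^{-1-n}}{2}\int_{b_x \le \rho}\left(\Hbx^2 + \mathrm{Ric}(\nabla b_x^2, \nabla b_x^2)\right)\, dV = \frac{\mathcal F_{r_1, r_2}(x)}{r_2 - r_1}.
\]
Next I would bound $\rho^{-1-n} \ge r_2^{-1-n}$ since $\rho \le r_2$, and use $\{b_x \le r_1\} \subset \{b_x \le \rho\}$ together with nonnegativity of the full integrand to obtain
\[
\frac{r_2^{-1-n}}{2}\int_{b_x \le r_1}\left(\Hbx^2 + \mathrm{Ric}(\nabla b_x^2, \nabla b_x^2)\right)\, dV \le \frac{\mathcal F_{r_1, r_2}(x)}{r_2 - r_1},
\]
which rearranges to the first displayed inequality. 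For the second inequality I would specialize $r_1 = r$, $r_2 = Cr$, drop the nonnegative Ricci term, note $r_2^{n+1} = C^{n+1} r^{n+1}$ and $r_2 - r_1 = (C-1)r$, so that $r_2^{n+1}/(r_2 - r_1) = \frac{C^{n+1}}{C-1} r^n$, yielding $\int_{b_x \le r}\Hbx^2\, dV \le \frac{2C^{n+1}}{C-1} r^n \,\mathcal F_{r, Cr}(x)$, i.e.\ \eqref{eq:tracefreebound} after dividing by $r^n$.

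This argument is essentially routine given Lemma~\ref{lem:Aprime}; there is no real obstacle. The only point requiring a word of care is the sign/monotonicity issue — namely that $F_x$ is nondecreasing so that $|F_x(r_2) - F_x(r_1)| = F_x(r_2) - F_x(r_1)$ and the mean value theorem applies cleanly — and the fact that we are throwing away the nonnegative Ricci contribution (legitimate since $\mathrm{Ric}\ge 0$) when passing to the second, cleaner statement. Both are immediate from the standing hypotheses.
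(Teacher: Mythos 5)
Your proof is correct and follows essentially the same route as the paper: apply Lemma~\ref{lem:Aprime} and the mean value theorem to $F_x$ on $[r_1,r_2]$, then use $\rho^{-1-n}\ge r_2^{-1-n}$ together with the inclusion $\{b_x\le r_1\}\subset\{b_x\le\rho\}$ and nonnegativity of the integrand, and finally specialize $r_1=r$, $r_2=Cr$ and drop the Ricci term. No discrepancies to flag.
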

  \begin{proof}  
Recall that by Lemma \ref{lem:Aprime}, $$  F_x'(r) = \frac{r^{-1-n}}{2} \int_{b_x \leq r} \left(\Hbx^2 + \text{Ric}(\nabla b_x^2, \nabla b_x^2)\right) \, dV.$$

By the mean value theorem, there exists some $s \in [r_1, r_2]$ such that 
\[
\frac{s^{-1-n}}{2} \int_{b_x \leq s} \left(\Hbx^2 + \text{Ric}(\nabla b_x^2, \nabla b_x^2)\right)\, dV  = \frac{F_x(r_2) - F_x(r_1)}{r_2 - r_1}.
\]
This implies that
\[
\frac{(r_2)^{-1-n}}{2} \int_{b_x \leq r_1} \left(\Hbx^2 + \text{Ric}(\nabla b_x^2, \nabla b_x^2)\right)\, dV  \leq \frac{F_x(r_2) - F_x(r_1)}{r_2 - r_1}
\]which gives the first result. Now, 
taking $r_1 = r$ and $r_2 = Cr_1$ for $C>1$, we get as promised,
\[
r^{-n} \int_{b_x \leq r} \left(\Hbx^2 + \text{Ric}(\nabla b_x^2, \nabla b_x^2)\right)\, dV  \leq \frac{2 C^{n+1}}{C-1} \left(F_x(Cr) - F_x(r)\right).
\]
\end{proof}

As a corollary, and using \cite[Corollary 4.8]{Colding}, we control the scale invariant distance to a cone by the $\mathcal F$-pinching. 

\begin{corollary}\label{cor:symmetry}
There exist $C(n,\tv)>0$, $\mu(n)>0$, and $s_0(n,\tv)>0$ so that for any $x \in M$, whenever $s>s_0$, there exists a cone $C(X)$ with vertex $o$ such that
$$\left(\frac{ d_{GH}(B^{M}_{x}(s),B^{C(X)}_{o}(s) )}s\right)^{2+2\mu(n)} \le C(n,\tv) \mathcal F_{2s,4s}(x).$$
\end{corollary}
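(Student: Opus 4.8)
The plan is to combine the integral estimate of Proposition~\ref{prop:0pinching_2} with the quantitative rigidity statement of \cite[Corollary 4.8]{Colding}, which should read roughly as: if the scale-invariant integral of $\Hbx^2$ (together with the Ricci term) over a sublevel set $\{b_x \le s\}$ is small, then the ball $B_x^M(s)$ is Gromov--Hausdorff close to a ball in a metric cone with vertex $o$, with the GH-distance controlled by a power of that integral. So the first step is to apply Proposition~\ref{prop:0pinching_2} with $r_1 = 2s$, $r_2 = 4s$, i.e. $C = 2$, obtaining
\[
(2s)^{-n} \int_{b_x \le 2s} \Hbx^2 \, dV \le 2\cdot 2^{n+1}\,\mathcal F_{2s,4s}(x),
\]
and note that the Ricci term only helps (it is nonnegative since $\mathrm{Ric}\ge 0$), so the same bound holds for $\int_{b_x\le 2s}(\Hbx^2 + \mathrm{Ric}(\nabla b_x^2,\nabla b_x^2))\,dV$ up to the factor $(2s)^n$. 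This is exactly the ``pinching'' hypothesis that feeds into Colding's cone-closeness estimate.

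The second step is to invoke \cite[Corollary 4.8]{Colding} on the ball $B_x^M(s)$ (or $B_x^M(2s)$, rescaled back to scale $s$). That result, applied for $s$ larger than some $s_0(n,\tv)$ so that the Euclidean-volume-growth normalization and the asymptotics \eqref{eqn: CM b conv} are effective, produces a metric cone $C(X)$ with vertex $o$ and a bound of the form
\[
\left(\frac{d_{GH}(B_x^M(s), B_o^{C(X)}(s))}{s}\right)^{2+2\mu(n)} \le C(n,\tv)\, s^{-n}\!\!\int_{b_x \le 2s}\!\!\Big(\Hbx^2 + \mathrm{Ric}(\nabla b_x^2,\nabla b_x^2)\Big)\, dV.
\]
Chaining this with the bound from Step~1 gives the claimed inequality, after absorbing the dimensional constant $2^{n+2}$ into $C(n,\tv)$. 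The dependence of $s_0$ and $C$ on $n$ and $\tv$ is inherited directly from Colding's corollary, and the cone $C(X)$ depends on $x$ and $s$ with no uniqueness asserted, matching the statement.

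The main obstacle — really the only nontrivial point — is ensuring that the precise hypotheses and the exponent $2+2\mu(n)$ of \cite[Corollary 4.8]{Colding} match what we need: in particular, that Colding's corollary is stated (or can be read off) with the sublevel-set integral of the trace-free Hessian of $b_x^2$ as the controlling quantity, and at the scale $\{b_x \le 2s\}$ rather than $\{b_x\le s\}$ or $\{d_x \le s\}$. Passing between $b_x$-sublevel sets and $d_x$-balls is harmless here because \eqref{eqn: CM b conv} and the gradient bound \eqref{eq:bgradbound} give two-sided comparisons $c(n)d_x \le b_x \le b_\infty^{-1}d_x$ at large scales (cf. Lemma~\ref{lem:subball}), so $B_x(s)$ is sandwiched between $\{b_x \le b_\infty^{-1}s\}$ and $\{b_x \le 2s\}$ for $s$ large; the resulting loss is a dimensional constant and a shift of $s_0$. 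One should also double-check that the factor $\frac{2C^{n+1}}{C-1}$ from Proposition~\ref{prop:0pinching_2} with $C=2$ is genuinely a constant depending only on $n$, which it is. Everything else is bookkeeping of constants.
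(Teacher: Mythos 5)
Your proposal is correct and follows essentially the same route as the paper: the paper's proof is literally to quote \cite[Corollary 4.8]{Colding}, which bounds $\bigl(s^{-1}d_{GH}(B^{M}_{x}(s),B^{C(X)}_{o}(s))\bigr)^{2+2\mu}$ by $C\,s^{-n}\int_{b_x\le 2s}\Hbx^2\,dV$, and then bound that integral by $\mathcal F_{2s,4s}(x)$ via Proposition~\ref{prop:0pinching_2} with $C=2$. Your caveats about the exact form of Colding's corollary are resolved in the paper exactly as you hoped, and your observation that the Ricci term can be dropped (it is nonnegative) is consistent with how the paper extracts the pure Hessian bound~\eqref{eq:tracefreebound}.
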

\begin{proof}
By  \cite[Corollary 4.8]{Colding}, there exist constants $C(n,\tv), \mu(n),$ and $s_0(n,\tv)>0$ so that whenever $s>s_0$, there exists a cone $C(X)$ such that
$$\left(\frac{d_{GH}(B^{M}_{x}(s),B^{C(X)}_{o}(s) )}s\right)^{2+2\mu} \le C s^{-n} \int_{b_x \le 2s} \Hbx^2 \, dV.$$  The result then follows immediately from Proposition \ref{prop:0pinching_2}.
    
\end{proof}

\begin{remark} \label{rem:symmetry} 
Note that the corollary above immediately implies that if $\mathcal{F}_{2s, 4s}(x) \le \delta^{4+4\mu}/C$, then $B_x(s)$ is $(0,\delta^2)$-symmetric. Here $C$ and $\mu$ are the constants from Corollary \ref{cor:symmetry}. 
\end{remark}

\subsection{Uniform estimates on $b_x$}
By (\ref{eqn: CM b conv}) we already know that $b_x/d_x$ converges to $1$ as $d_x \to \infty$. In the following proposition we show that at points which are almost conical at large scales, the difference is uniformly bounded on fixed smaller scales. Additionally, we get a uniform integral gradient estimate.

\begin{proposition}\label{prop:UniformEst} [Uniform $\mathcal{C}^0$- and $W^{1,2}$-convergence of $b_x$ on fixed balls]
Fix $r>0$, $p \in M$ and $C>1$. Given $\epsilon>0$, there exists $\delta_0=\delta_0(\epsilon, n, \mathrm v)>0$ such that the following holds. For any $0<\delta\le\delta_0$ and any point $x \in B_p(r)$ such that $B_x(1/\delta)$ is $(0,\delta^2)$-symmetric, we have
\[
\sup_{B_p(r)}\left| \frac{b_{x}^2}{d_{x}^2}-1\right|<\epsilon
\]and 
\[
\left|\fint_{B_p(r)}(\left| \nabla b_{x} \right|^2-1)\, dV\,\right|<\epsilon.
\]
\end{proposition}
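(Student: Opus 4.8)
The plan is to argue by contradiction via a compactness (Gromov--Hausdorff + Cheeger--Colding) argument, exploiting that on a metric cone the Green function, and hence $b_x$, is explicitly the distance function. Suppose the statement fails: then there is $r>0$, $\epsilon>0$, a sequence $\delta_i \to 0$, manifolds $(M_i^n,g_i)$ with $\mathrm{Ric} \geq 0$ and $\mathrm v_{M_i} \geq \mathrm v$, points $p_i$ and $x_i \in B_{p_i}(r)$, and scales $s_i \geq r/\delta_i$ such that $B_{x_i}(s_i)$ is $(0,\delta_i)$-symmetric but either $\sup_{B_{p_i}(r)} |b_{x_i}^2/d_{x_i}^2 - 1| \geq \epsilon$ or $|\fint_{B_{p_i}(r)}(|\nabla b_{x_i}|^2 - 1)\,dV| \geq \epsilon$. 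I would then rescale so that $x_i$ is the basepoint (the inner radius $r$ is fixed, so no rescaling of the metric is actually needed — the point is that $s_i \to \infty$). Passing to a subsequence, $(M_i, g_i, x_i)$ converges in the pointed Gromov--Hausdorff sense to a limit; since $B_{x_i}(s_i)$ is $(0,\delta_i)$-symmetric with $s_i \to \infty$, the limit is a metric cone $C(Y)$ with vertex $x_\infty$, where $Y$ is a compact metric measure space and $\mathrm v_{C(Y)} \geq \mathrm v$. The points $p_i$ converge to some $p_\infty \in \overline{B_{x_\infty}(r)}$.

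The core analytic step is convergence of the Green functions. By the scaling conventions of Section~\ref{sec:greens_scaling}, $b_{x_i} = b_\infty^{-1}(G^{CM}_{x_i})^{1/(2-n)}$, and the stability of Green functions on noncollapsed Ricci limit spaces (this is exactly what Ding's \cite[Corollary 4.22]{ding}, cited in the excerpt, provides) shows that $G^{CM}_{x_i}$ converges, locally uniformly away from $x_i$ and in $W^{1,2}_{loc}$, to the Green function $G^{CM}_\infty(x_\infty, \cdot)$ on $C(Y)$. The excerpt has already computed that on the cone
\[
G^{CM}_\infty(x_\infty, y) = b_\infty^{2-n}\, d_\infty^{2-n}(x_\infty, y),
\]
where $b_\infty = (\mathrm v_{C(Y)}/\omega_n)^{1/(n-2)}$ is computed with the \emph{limit} volume density; since $\mathrm v_{M_i} \to \mathrm v_{C(Y)}$, the normalizing constants $b_\infty$ also converge. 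Hence $b_{x_i} \to b_{x_\infty}^{C(Y)} = d_\infty(x_\infty, \cdot)$ uniformly on the compact set $\overline{B_{x_\infty}(2r)}$ (which lies at definite distance from a fixed annulus where harmonic estimates give equicontinuity) and in $W^{1,2}$ there. In particular $b_{x_i}^2/d_{x_i}^2 \to 1$ uniformly on $\overline{B_{p_\infty}(r)}$ and $|\nabla b_{x_i}|^2 \to |\nabla d_\infty|^2 = 1$ in $L^1$ of $B_{p_\infty}(r)$. Using that $d_{x_i} \to d_\infty(x_\infty, \cdot)$ in Gromov--Hausdorff and that $d_{x_i}$ is bounded below by a positive constant on $B_{p_i}(r)$ when $x_i$ stays close to $p_i$ — one must be slightly careful here since $x_i$ could coincide with $p_i$; I would instead phrase the $C^0$ statement as $\sup |b_{x_i}^2 - d_{x_i}^2| \to 0$ combined with the a priori bound (\ref{eq:bgradbound}) and the triangle inequality to recover the ratio statement on $B_{p_i}(r) \cap \{d_{x_i} \geq \rho\}$ for each $\rho>0$, and handle the region $\{d_{x_i} < \rho\}$ directly from $|b_{x_i} - d_{x_i}| \leq o(1) + (\text{const})\rho$. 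Either way, for $i$ large both quantities in the proposition are $< \epsilon$, contradicting the choice of the sequence.

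The main obstacle is making the Green-function convergence rigorous in the noncollapsed Ricci-limit setting: one needs (i) uniform (in $i$) bounds on $G^{CM}_{x_i}$ on annuli $A^{d}(x_i, \rho, R)$ away from the pole — available from the gradient estimate $|\nabla b_{x_i}| \leq b_\infty^{-1}$ together with the asymptotics (\ref{eqn: CM b conv}) and Harnack/Cheng--Yau-type estimates — giving equicontinuity and hence uniform convergence on compacta of $C(Y) \setminus \{x_\infty\}$; and (ii) identification of the limit with \emph{the} Green function on $C(Y)$, i.e. that no mass is lost at the pole and that the limit is harmonic with the correct singularity, which is precisely the content of Ding's result that the excerpt invokes for exactly this proposition. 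A secondary technical point is the uniform $W^{1,2}$ convergence near (but not at) $x_\infty$: this follows from elliptic $L^2$ energy estimates for the harmonic functions $G^{CM}_{x_i}$ on annular regions, combined with the fact that these functions and their limit are all harmonic, so Cheeger--Colding's $W^{1,2}$-compactness for harmonic functions with bounded energy applies. Once these two ingredients are in hand, the rest is the soft contradiction argument sketched above.
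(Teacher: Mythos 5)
Your strategy --- argue by contradiction, pass to a pointed Gromov--Hausdorff limit which must be a metric cone since $s_i \to \infty$, and invoke Ding's results to identify the limit of $b_{x_i}$ with the cone distance --- is exactly the one the paper uses, and the $C^0$ part of your sketch matches the paper's, which cites \cite[Theorem 3.21]{ding} for the uniform convergence of $b_{x_i}$ to $d_\infty(x_\infty,\cdot)$ on compacta.

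Two points are worth flagging. First, for the $W^{1,2}$ convergence the paper proceeds more directly than you propose. Rather than running a $W^{1,2}$-compactness argument for the harmonic functions $G^{CM}_{x_i}$ on annuli and then transferring to $b_{x_i}=b_\infty^{-1}(G^{CM}_{x_i})^{1/(2-n)}$ --- a transfer that needs a chain-rule argument near the pole, where $G\to\infty$ --- the paper bounds $\Delta b_{x_i}$ in $L^2(B_{p_i}(r))$ directly via the identity $\Delta b_x=(n-1)|\nabla b_x|^2/b_x$, the gradient bound \eqref{eq:bgradbound}, and the already-established $C^0$ conclusion (which gives $b_{x_i}^{-2}\le 2d_{x_i}^{-2}$, locally integrable), and then applies \cite[Theorem 4.29]{cjn} to the functions $b_{x_i}$ themselves. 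Your route through the $G$'s is not wrong, but it is less economical and leaves the pole transfer as a loose end. Second, you are right to be uneasy about the ratio $b_x^2/d_x^2$ near the pole: on a smooth manifold one has $b_x/d_x \to b_\infty^{-1}$ as $d_x\to 0$, which exceeds $1$ unless $\tv=\omega_n$, so uniform smallness of $b_{x_i}-d_{x_i}$ alone does not force $\sup_{B_{p_i}(r)}|b_{x_i}^2/d_{x_i}^2-1|$ to be small. However, the patch you propose on $\{d_{x_i}<\rho\}$, namely ``$|b_{x_i}-d_{x_i}|\le o(1)+C\rho$,'' does not control the ratio there, since $|b^2/d^2-1|=|b-d|(b+d)/d^2$ is not bounded by $|b-d|$ when $d$ is small. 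What does hold, and what is used both in the paper's own $W^{1,2}$ step and later in the proof of Theorem \ref{thm:k splitting}, is the difference form $\sup_{B_p(r)}|b_x^2-d_x^2|<\epsilon r^2$; this is precisely the reformulation you identified, and if you want the ratio form up to the pole you would need a finer local expansion of $G^{CM}_x$ near $x$, not just the Lipschitz bound.
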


\begin{proof}Suppose for some $\epsilon>0$, no such $\delta_0$ exists. Then there are manifolds $(M_i, g_i, p_i)$ with distance function $d_i$ satisfying $\mathrm v_{M_i} \geq \mathrm v>0$, and a sequence $\delta_i \to 0$ and points $x_i \in B_{p_i}(r)$ such that 
$B_{x_i}({1}/{\delta_i})$ is $(0, \delta_i^2)$-symmetric but either 
\begin{equation} \label{option:C0}
   \sup_{B_{p_i}(r)}\left| \frac{b_{x_i}^2}{d_{x_i}^2}-1\right|\geq\epsilon 
\end{equation}
or
\begin{equation} \label{option:W12}
   \left|\fint_{B_{p_i}(r)}(\left|\nabla b_{x_i} \right|^2-1)\, dV_i \right| \geq\epsilon. 
\end{equation}
Here $b_{x_i}, d_{x_i}, dV_i$ denote the appropriate functions and the volume measure on $M_i$ with respect to the metrics $g_i$. 
Now by Gromov compactness, a subsequence of $(M_i, g_i, x_i, dV_{i})$ converges in the pointed Gromov-Hausdorff sense to a limit space $ (M_\infty,d_\infty, x_\infty, \mu_{\infty})$ and moreover $\lim_{i \to \infty}\mathrm v_{M_i}$ exists and is strictly positive. Since each $B_{x_i}(1/\delta_i)$ is $(0,\delta_i^2)$-symmetric, the limit space is a cone. Denote $M_\infty = C(Y)$, the cone over a metric measure space $Y$.

Let $G_i$ denote the Green function on $(M_i,  g_i)$ with pole at $x_i$. By \cite[Corollary 4.22]{ding}, the Green function on $M_\infty$ can be explicitly calculated as $G^D_{x_\infty} = (n-2)^{-1}\mu_Y(Y)^{-1}  d_\infty^{2-n}(x_\infty,\cdot)$, where the superscript $D$ signifies the scaling given in Section \ref{sec:greens_scaling} and $\mu_Y$ is the cross sectional measure on $Y$. Renormalizing to $G^{CM}_{x_\infty}$ as discussed in Section \ref{sec:greens_scaling}, 
 we get that $(b_{M_\infty})_{x_\infty}= d_\infty(x_\infty,\cdot)$.  Now \cite[Theorem 3.21]{ding} implies that for any compact set in $M_\infty$, $ b_{x_i}$ converges to $d_{\infty}(x_\infty,\cdot)$ uniformly. Therefore (\ref{option:C0}) can only hold finitely often.

 Now note that (\ref{eq:bgradbound}) implies that there exists an $I$ such that for all $i \geq I$, 
 \[
 \fint_{\Brpi}\left(|b_{x_i}|^2 + |\nabla b_{x_i}|^2 \right) \,dV_i\leq C(r,n,\mathrm v).
 \]
 Moreover, also by (\ref{eq:bgradbound}),
 \[
 \fint_{\Brpi}|\Delta b_{x_i}|^2 \,dV_i= \fint_{\Brpi} \frac{(n-1)^2}{b_{x_i}^2}|\nabla b_{x_i}|^4\,dV_i\leq C(n,\mathrm v)\fint_{\Brpi} \frac{1}{b_{x_i}^2}\,dV_i.
 \]Finally, since \eqref{option:C0} holds only finitely often, for possibly larger $I$, $b_{x_i}^{-2} \leq 2d_{x_i}^{-2}$ on $B_{p_i}(r)$ for all $i \geq I$. So the integral above is finite and bounded by $C(r,n, \mathrm v)$. Let $\lim_i p_i \to p_\infty$,  taking another subsequence if needed. Then, by \cite[Theorem 4.29]{cjn}, $ b_{x_i} \to  d_\infty(x_\infty, \cdot)$ in $W^{1,2}$ on $B_{p_\infty}^{ d_\infty}(r)$.
This limit contradicts (\ref{option:W12}) holding for arbitrarily large $i$, and the result follows.
\end{proof}

Combining Remark \ref{rem:symmetry} with Proposition \ref{prop:UniformEst}, we immediately see that pinching implies uniform estimates.

\begin{corollary}\label{cor:uniform est}
Let $r>0$ be arbitrary. Given $\epsilon>0$, there exists $\delta_0=\delta_0(\epsilon, n, \mathrm v)>0$ such that if $x \in B_p(r)$ satisfies $\mathcal F_{2\delta^{-1},4\delta^{-1}}(x)<\delta^{4+4\mu}/C$, where $0<\delta<\delta_0$ and $\mu,C$ are the constants from Corollary \ref{cor:symmetry}, then 
\[
\sup_{\Brp}\left| \frac{b_{x}^2}{d_{x}^2}-1\right|<\epsilon
\]and 
\[
\left|\fint_{\Brp}(\left| \nabla b_{x} \right|^2-1) \, dV \,\right|<\epsilon.
\]
\end{corollary}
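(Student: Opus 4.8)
The plan is to derive this by simply chaining the two preceding results: Remark~\ref{rem:symmetry}, which upgrades $\mathcal F$-pinching at scale $s$ to quantitative $(0,\cdot)$-symmetry of $B_x(s)$, and Proposition~\ref{prop:UniformEst}, which upgrades $(0,\cdot)$-symmetry at a large scale to uniform $C^0$- and $W^{1,2}$-control of $b_x$ on the fixed ball $\Brp$. Since $\mathcal F_{2s,4s}(x)$, the $(0,\cdot)$-symmetry condition, the volume growth $\tv$, and the two quantities $b_x^2/d_x^2$ and $|\nabla b_x|^2$ appearing in the conclusion are all scale invariant, I would first rescale the metric so that $r=1$; this both reduces everything to constants depending only on $\epsilon$, $n$, and $\mathrm v$ and makes the lower scale bound $s\geq r/\delta$ read as $s\geq 1/\delta$, which will exceed the threshold $s_0(n,\tv)$ required to invoke Corollary~\ref{cor:symmetry} through Remark~\ref{rem:symmetry}.

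Concretely, fix $\epsilon>0$, let $\delta_0'=\delta_0'(\epsilon,n,\mathrm v)>0$ be the threshold from Proposition~\ref{prop:UniformEst}, and let $C=C(n,\tv)$, $\mu=\mu(n)$, $s_0=s_0(n,\tv)$ be the constants of Corollary~\ref{cor:symmetry} (hence of Remark~\ref{rem:symmetry}). One then sets
\[
\delta_0 := \min\left\{ \frac{(\delta_0')^{2+2\mu}}{C},\ C^{1/(1+2\mu)},\ \frac{1}{s_0},\ 1 \right\},
\]
and checks that for $x\in B_p(1)$ with $\mathcal F_{2s,4s}(x)<\delta<\delta_0$ and $s\geq 1/\delta$: first $s\geq 1/\delta>1/\delta_0\geq s_0$, so Remark~\ref{rem:symmetry} gives that $B_x(s)$ is $(0,\delta'')$-symmetric with $\delta'':=(C\delta)^{1/(2+2\mu)}$; the bound $\delta<(\delta_0')^{2+2\mu}/C$ yields $\delta''<\delta_0'$; and the bound $\delta\leq C^{1/(1+2\mu)}$ yields $\delta''\geq\delta$, hence $s\geq 1/\delta\geq 1/\delta''$. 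Thus the hypotheses of Proposition~\ref{prop:UniformEst} hold with the symmetry parameter $\delta''$ in place of $\delta$, and applying it produces exactly the two asserted estimates on $B_p(1)=\Brp$. Unwinding the rescaling gives the statement for general $r$.

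There is no real analytic content to overcome here; the only thing needing care is the bookkeeping of the quantifiers — specifically, verifying that after the exponent change $\delta\mapsto(C\delta)^{1/(2+2\mu)}$ the scale hypothesis $s\geq r/\delta''$ of Proposition~\ref{prop:UniformEst} is still implied by the given $s\geq r/\delta$, and that the threshold $s>s_0$ for Corollary~\ref{cor:symmetry} is met. Both are arranged by the choice of $\delta_0$ above, after the harmless normalization $r=1$.
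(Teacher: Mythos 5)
Your proposal is correct and takes exactly the route the paper does: the paper simply states that the corollary follows by ``combining Remark~\ref{rem:symmetry} with Proposition~\ref{prop:UniformEst},'' and your argument is a careful unwinding of that combination, with the requisite bookkeeping of the exponent shift $\delta \mapsto (C\delta)^{1/(2+2\mu)}$, the scale threshold $s > s_0$, and the requirement $s \geq r/\delta''$. The only thing I'd note is that you were appropriately more careful than the paper's one-line assertion, checking in particular that $\delta'' \geq \delta$ (via $\delta \leq C^{1/(1+2\mu)}$) so that the lower bound $s \geq r/\delta$ still implies the needed $s \geq r/\delta''$ for Proposition~\ref{prop:UniformEst}.
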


\subsection{Quantitative bounds using $\mathcal F$}\label{sec:Fpinch}

The following proposition provides a quantitative bound on the average $L^2$-norm of the trace-free Hessian of the function $b^2_x$ on a ball in terms of $\mathcal{F}$. Note that items (1) and (3) of the proposition below would also hold with $\mathcal W_{s,2s}(x)$ replacing $\mathcal F_{s,2s}(x)$ if $x \in B_p(1)$ and the domain $B_p(r)$ was replaced by an annulus $A^b(p,C_1s,C_2s)$ where $C_1,C_2$ depend on $\mathrm v,n$. If, moreover, such annuli were connected then a result comparable to item (2) would hold. We do not write out such a result in detail since it is not strong enough to prove something comparable to Theorem \ref{thm:k splitting}. (See the discussion in the Introduction for more on this failure of control by $\mathcal W$.)

\begin{proposition}\label{prop:1pinching}  Let $r>0$. For all  $s \geq 2b_{\infty}^{-1}r$, $x \in B_r(p) \subset M$ and $c_{r,x}:=\fint_{B_p(r)} |\nabla b_x|^2  \, dV$, the following hold:
\begin{enumerate}
    \item $\fint_{B_p(r)} \Hbx^2 \, dV \leq C(n,\tv) \left(\frac sr\right)^n \mathcal F_{s,2s}(x)$,
    \item $r^{-2}\fint_{B_p(r)}\left||\nabla b_x|^2-c_{r,x}\right|^2  \, dV\leq  C(n,\tv) \fint_{\Brp} \Hbx^2b_x^{-2}\, dV$,
    \item $\sup_{B_p(r)}\left|\nabla b_x^2\right|\leq 4b_\infty^{-2}r$. 
\end{enumerate}
\end{proposition}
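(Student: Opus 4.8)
The plan is to establish the three items more or less independently, with item (1) following directly from the already-proved pinching estimate, item (3) being an elementary consequence of the gradient bound, and item (2) requiring the Neumann Poincaré inequality together with a Bochner-type identity. First, for item (1): by Lemma \ref{lem:subball} we have $B_p(r) \subset \{b_x \le 2b_\infty^{-1}r\} \subset \{b_x \le s\}$ since $s \ge 2b_\infty^{-1}r$. Applying Proposition \ref{prop:0pinching_2} with $C = 2$ at scale $s$ gives $s^{-n}\int_{b_x \le s}\Hbx^2\,dV \le 2^{n+2}\mathcal F_{s,2s}(x)$, and dropping the (nonnegative, by $\mathrm{Ric}\ge 0$) Ricci term only helps. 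Then
\[
\fint_{B_p(r)}\Hbx^2\,dV = \frac{1}{\mathrm{Vol}(B_p(r))}\int_{B_p(r)}\Hbx^2\,dV \le \frac{s^n}{\mathrm{Vol}(B_p(r))}\,2^{n+2}\mathcal F_{s,2s}(x),
\]
and Bishop–Gromov together with the Euclidean volume lower bound $\mathrm v_M \ge \mathrm v$ gives $\mathrm{Vol}(B_p(r)) \ge c(n,\mathrm v)\,r^n$, so the quotient $s^n/\mathrm{Vol}(B_p(r))$ is bounded by $C(n,\mathrm v)(s/r)^n$, yielding item (1).

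Item (3) is immediate: $\nabla b_x^2 = 2b_x\nabla b_x$, so $|\nabla b_x^2| \le 2b_x|\nabla b_x| \le 2b_x b_\infty^{-1}$ by \eqref{eq:bgradbound}, and on $B_p(r)$ we have $b_x \le 2b_\infty^{-1}r$ by Lemma \ref{lem:subball}, giving $|\nabla b_x^2| \le 4b_\infty^{-2}r$.

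The substantive step is item (2). The idea is that $|\nabla b_x|^2$ differs from a constant by a Poincaré-controlled amount, and its gradient can be estimated by the Hessian of $b_x^2$. Applying Lemma \ref{lem:NPi} to the function $u = |\nabla b_x|^2$ on $\Omega = B_p(r)$ (noting $\mathrm{diam}(B_p(r)) \le 2r$) gives
\[
r^{-2}\fint_{B_p(r)}\bigl||\nabla b_x|^2 - c_{r,x}\bigr|^2\,dV \le C(n,\mathrm v)\fint_{B_p(r)}\bigl|\nabla|\nabla b_x|^2\bigr|^2\,dV,
\]
after absorbing the exponent $k>1$ by Jensen (the $L^{2k}$ average dominates the $L^2$ average). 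It then remains to bound $\bigl|\nabla |\nabla b_x|^2\bigr|$ pointwise by a multiple of $\Hbx\, b_x^{-2}$ (times bounded factors). Here one uses the relation between $\mathrm{Hess}_{b_x^2}$ and $\mathrm{Hess}_{b_x}$: since $b_x^2 = (b_x)^2$, one has $\mathrm{Hess}_{b_x^2} = 2b_x\,\mathrm{Hess}_{b_x} + 2\,db_x\otimes db_x$, so that $\nabla|\nabla b_x|^2 = 2\,\mathrm{Hess}_{b_x}(\nabla b_x,\cdot)$ and thus $\bigl|\nabla|\nabla b_x|^2\bigr| \le 2|\mathrm{Hess}_{b_x}||\nabla b_x| \le 2b_\infty^{-1}|\mathrm{Hess}_{b_x}|$. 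To pass from $|\mathrm{Hess}_{b_x}|$ to the trace-free Hessian of $b_x^2$ one uses $\mathrm{Hess}_{b_x} = \tfrac{1}{2b_x}(\mathrm{Hess}_{b_x^2} - 2\,db_x\otimes db_x)$ and decomposes $\mathrm{Hess}_{b_x^2}$ into its trace-free part plus $\tfrac{\Delta b_x^2}{n}g$; the key point is that the term involving $\Delta b_x^2$ and the term involving $db_x\otimes db_x$ are designed to (nearly) cancel because $\Delta b_x^2 = 2b_x\Delta b_x + 2|\nabla b_x|^2$ and $b_x\Delta b_x = (n-1)|\nabla b_x|^2$ (the latter from the explicit ODE for $b_x$ as a power of the Green function), so $\Delta b_x^2 = 2n|\nabla b_x|^2$ exactly and $\tfrac{\Delta b_x^2}{n}g - 2\,db_x\otimes db_x = 2|\nabla b_x|^2 g - 2\,db_x\otimes db_x$; this remainder has pointwise norm bounded by $Cb_\infty^{-2}$, a constant, not obviously small. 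This is the main obstacle: one must check that the remaining (non-trace-free, non-$dbdb$) contribution to $\nabla|\nabla b_x|^2$ actually vanishes or is itself controlled — and indeed it does vanish, because $\nabla|\nabla b_x|^2$ is exactly $2\,\mathrm{Hess}_{b_x}(\nabla b_x,\cdot)$, and feeding $\nabla b_x$ into the $g$ and $db_x\otimes db_x$ terms produces $2|\nabla b_x|^2 \nabla b_x$ in both cases, which cancel. Hence $\nabla|\nabla b_x|^2 = b_x^{-1}\bigl(\mathrm{Hess}_{b_x^2} - \tfrac{\Delta b_x^2}{n}g\bigr)(\nabla b_x,\cdot)$ up to the cancelling terms, giving the clean pointwise bound $\bigl|\nabla|\nabla b_x|^2\bigr| \le b_\infty^{-1}\Hbx\, b_x^{-1}$, and after squaring and absorbing $b_\infty^{-1}$ and one factor of $b_x^{-1} \le b_x^{-1}$ appropriately — here one should be slightly careful, writing $b_x^{-2}$ to match the statement — one obtains item (2). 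I would double-check the exact placement of the $b_x$ powers against the statement's weight $b_x^{-2}$; the cleanest route is to carry the identity $\nabla|\nabla b_x|^2 = b_x^{-1}\bigl(\mathrm{Hess}_{b_x^2}-\tfrac{\Delta b_x^2}{n}g\bigr)(\nabla b_x, \cdot)$ and then use $|\nabla b_x| \le b_\infty^{-1}$ to land on $\bigl|\nabla|\nabla b_x|^2\bigr|^2 \le b_\infty^{-4}\Hbx^2 b_x^{-2}$, completing the proof.
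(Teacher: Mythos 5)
Your proposal is correct and follows essentially the same route as the paper's proof: item (1) via Proposition \ref{prop:0pinching_2} together with $B_p(r)\subset\{b_x\le s\}$ and Bishop--Gromov, item (3) from $|\nabla b_x^2|=2b_x|\nabla b_x|$ plus \eqref{eq:bgradbound} and Lemma \ref{lem:subball}, and item (2) from the Neumann Poincar\'e inequality once one has the pointwise identity $\nabla|\nabla b_x|^2 = \bigl(\mathrm{Hess}_{b_x^2}-\tfrac{\Delta b_x^2}{n}g\bigr)\bigl(\tfrac{\nabla b_x}{b_x},\cdot\bigr)$. The only cosmetic difference is that for item (2) you pass through $\mathrm{Hess}_{b_x}$ and then observe the cancellation between $\tfrac{\Delta b_x^2}{n}g(\nabla b_x,\cdot)$ and $2\,db_x\otimes db_x(\nabla b_x,\cdot)$ using $\Delta b_x^2=2n|\nabla b_x|^2$, whereas the paper differentiates $|\nabla b_x^2|^2/(4b_x^2)$ directly by the quotient rule; both yield the same identity and the same weight $b_x^{-2}$.
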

\begin{proof}
 Note first that by \eqref{eq:tracefreebound}, with $C=2$, 
\[
 \int_{b_x \le s} \Hbx^2\,dV \leq s^n{2^{n+2}} \mathcal F_{s,2s}(x).
\]Now by Lemma \ref{lem:subball}, $B_p(r) \subset \{b_x \leq s\}$ so
\[
\int_{B_p(r)} \Hbx^2\,dV \leq s^n{2^{n+2}} \mathcal F_{s,2s}(x)
\]or
\[
\fint_{\Brp} \Hbx^2 \,dV\leq {2^{n+2}}\frac{ s^n}{|B_p(r)|} \mathcal F_{s,2s}(x)\leq C(n,\tv) \left(\frac sr\right)^n\mathcal F_{s,2s}(x).
\]
This proves item (1).

To prove item (2), observe first that
    \begin{align*}
        \nabla (|\nabla b_x|^2) &= \nabla \left(\frac {|\nabla b_x^2|^2}{4b_x^2}\right)\\
        &=\frac{\mathrm{Hess}_{b_x^2}(\nabla b_x^2, \cdot)}{2b_x^2}-\frac{|\nabla b_x^2|^2 \nabla b_x}{2b_x^3}\\
&=\mathrm{Hess}_{b_x^2}\left(\frac{\nabla b_x}{b_x}, \cdot\right) -\frac{\Delta b_x^2}n g\left( \frac{\nabla b_x}{b_x}, \cdot\right).
    \end{align*}
Therefore, \[|\nabla (|\nabla b_x|^2)|^2 \le \left|\mathrm{Hess}_{b_x^2}-\frac{\Delta b_x^2}{n}g\right|^2 \frac{|\nabla b_x|^2}{b_x^2}.\]

Now (\ref{eq:bgradbound}) and Lemma \ref{lem:NPi} together imply that 
\begin{align*}
\fint_{B_p(r)} \left||\nabla b_x|^2 - c_{r,x}\right|^2 \, dV 
&\leq C(n)r^2  \fint_{B_p(r)} \Hbx^2 \frac{|\nabla b_x|^2}{b_x^2}\, dV \\
&\leq b_\infty^{-2} C(n)r^2 \fint_{B_p(r)} \Hbx^2b_x^{-2}\,dV.
\end{align*}
Finally, item (3) is immediate from (\ref{eq:bgradbound}), as 
\[
|\nabla b_x^2|= 2b_x |\nabla b_x| \leq  2 b_\infty^{-2}d_x \leq 4b_{\infty}^{-2} r
\]
on $B_p(r)$.
\end{proof}

\section{Proofs of the Main Theorems} \label{sec: splitting}

In this section we present the proofs of our main results on the construction of the almost $k$-splitting functions using the Green function. The following is the precise version of Theorem \ref{thm:k splitting intro}.

\begin{theorem}\label{thm:k splitting}
Let $r, \alpha >0$.  There exist $\delta_0=\delta_0(n,\mathrm v, \alpha, r)>0$ and $C=C(n,\mathrm v,\alpha)>0$ such that if $\{x_0, x_1, \dots, x_k\} \subset B_p(r)$ are $(k, \alpha)$-independent and each $B_{x_i}(1/\delta)$ is $(0, \delta^2)$-symmetric, where $0<\delta<\delta_0$, 
then there exists a function $u:\Brp \to \R^k$ 
such that for each $i,j = 1,\cdots,k$,

 \begin{enumerate}[(1)]
     \item \label{item:Hess} \begin{align*}
 r^2\fint_{\Brp}& |\text{Hess}_{u_j}|^2 \,dV \leq\, C\frac{1}{(\delta r)^{n}}\left( \mathcal F_{\delta^{-1},2\delta^{-1}}(x_j) + \mathcal F_{\delta^{-1},2\delta^{-1}}(x_0)\right) \\
 &+Cr^2\left(\fint_{\Brp} \left| \text{Hess}_{b_{x_j}^2} - \frac{\Delta b_{x_j}^2}{n}g\right|^2 b_{x_j}^{-2} \, dV + \fint_{\Brp} \left| \text{Hess}_{b_{x_0}^2} - \frac{\Delta b_{x_0}^2}{n}g\right|^2 b_{x_0}^{-2} \, dV\right),
     \end{align*} 
     \item \label{item:intgrad} $\fint_{\Brp}\left|\langle \nabla u_i, \nabla u_j\rangle - \delta_{ij}\right|^2 \,dV\leq C \sum_{\ell=0}^{k} \mathcal F_{\delta^{-1},2\delta^{-1}}(x_\ell)$, 
     \item \label{item:pointwise grad} $\sup_{\Brp}|\nabla u_i|\leq C$.
 \end{enumerate}
\end{theorem}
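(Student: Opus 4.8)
The plan is to build $u$ from the functions
\[
v_j := \frac{b_{x_j}^2 - b_{x_0}^2 - d(x_j,x_0)^2}{2\,d(x_j,x_0)}, \qquad j=1,\dots,k,
\]
and then replace $v=(v_1,\dots,v_k)$ by $u := G^{-1/2}v$, where $G_{ij} := \fint_{B_p(r)}\langle \nabla v_i,\nabla v_j\rangle\,dV$; the $(k,\alpha)$-independence will guarantee that $G$ is positive definite with bounds depending only on $n$ and $\alpha$, so this near-identity linear correction is harmless while forcing $\fint_{B_p(r)}\langle\nabla u_i,\nabla u_j\rangle = \delta_{ij}$ exactly. Write $d_j := d(x_j,x_0)$; $(k,\alpha)$-independence forces $c(n,\alpha)r \le d_j \le 2r$. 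First I fix $\delta_0=\delta_0(n,\mathrm v)$ small enough that $s\ge r/\delta_0 \ge 2b_\infty^{-1}r$ (using $\tv\ge\mathrm v$, hence $b_\infty^{-1}\le (\omega_n/\mathrm v)^{1/(n-2)}$), so Proposition \ref{prop:1pinching}, Proposition \ref{prop:0pinching_2} and Corollary \ref{cor:uniform est} all apply on $B_p(r)$ for each $x_i$; I shrink $\delta_0$ further along the way. Item \ref{item:pointwise grad} is then immediate: $\nabla v_j = d_j^{-1}(b_{x_j}\nabla b_{x_j}-b_{x_0}\nabla b_{x_0})$, and on $B_p(r)$ one has $b_{x_i}\le 2b_\infty^{-1}r$ by Lemma \ref{lem:subball} and $|\nabla b_{x_i}|\le b_\infty^{-1}$ by \eqref{eq:bgradbound}, so $|\nabla v_j|\le C(n)r/d_j \le C(n,\alpha)$, and the linear correction changes this by a bounded factor.

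For item \ref{item:Hess}: using $\Delta b_{x_i}^2 = 2n|\nabla b_{x_i}|^2$ (from $\Delta b_{x_i} = (n-1)b_{x_i}^{-1}|\nabla b_{x_i}|^2$, as in the proof of Proposition \ref{prop:UniformEst}), write $\mathrm{Hess}_{b_{x_i}^2} = H_i + 2|\nabla b_{x_i}|^2 g$ with $H_i := \mathrm{Hess}_{b_{x_i}^2} - \tfrac{\Delta b_{x_i}^2}{n}g$. Since $2d_j\,\mathrm{Hess}_{v_j} = (H_j - H_0) + 2(|\nabla b_{x_j}|^2 - |\nabla b_{x_0}|^2)g$, we get
\[
|\mathrm{Hess}_{v_j}|^2 \le \frac{C}{d_j^2}\Big(|H_j|^2 + |H_0|^2 + \big(|\nabla b_{x_j}|^2 - |\nabla b_{x_0}|^2\big)^2\Big).
\]
Averaging over $B_p(r)$ and multiplying by $r^2$: the terms $r^2 d_j^{-2}\fint|H_i|^2$ are bounded by $C(n,\mathrm v)(s/r)^n\mathcal F_{s,2s}(x_i)$ via Proposition \ref{prop:1pinching}(1) and $r^2/d_j^2\le C(n,\alpha)$, which is the first term on the right of \ref{item:Hess}. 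For the last term, split $|\nabla b_{x_i}|^2 - |\nabla b_{x_0}|^2$ using the averages $c_i := \fint_{B_p(r)}|\nabla b_{x_i}|^2$: Proposition \ref{prop:1pinching}(2) bounds $\fint_{B_p(r)}(|\nabla b_{x_i}|^2 - c_i)^2$ by $C(n,\mathrm v)r^2\fint_{B_p(r)}|H_i|^2 b_{x_i}^{-2}$, which are exactly the weighted-Hessian terms appearing in \ref{item:Hess}. What is left over is $(c_j-c_0)^2$.

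Controlling $(c_j-c_0)^2$ is the main obstacle, and I would handle it by showing $|c_{r,x}-1|^2$ is itself controlled by $(s/r)^n\mathcal F_{s,2s}(x)$ together with $r^2\fint_{B_p(r)}|H_x|^2 b_x^{-2}$. The two ingredients are: (i) the pointwise identity from the proof of Proposition \ref{prop:1pinching}(2), $|\nabla(|\nabla b_x|^2)| \le |H_x|\,|\nabla b_x|/b_x$, which combined with Proposition \ref{prop:0pinching_2} at each dyadic scale $\rho\in[r,s]$ shows $|\nabla b_x|^2$ is quantitatively close to a constant on $B_p(r)$ and stays close to that same constant out to scale $\sim s$; and (ii) the uniform estimate of Corollary \ref{cor:uniform est}, valid once $\delta_0$ is small (depending on the target accuracy), which pins the value of $|\nabla b_x|^2$ near $1$ at scale $\sim s$. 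A dyadic Poincaré/telescoping argument comparing $\fint_{B_p(r)}|\nabla b_x|^2$ to the corresponding average on an annulus at scale $s$ then turns "$|\nabla b_x|^2 \approx 1$'' into the required quantitative bound; this is precisely the step that couples the inner scale $r$ to the large scale $s$.

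For item \ref{item:intgrad}: a direct computation on a metric cone $\R^k\times C(Z)$ with the $x_i$ in the $\R^k$-factor gives $\langle\nabla v_i,\nabla v_j\rangle \equiv \langle x_0-x_i,x_0-x_j\rangle/(d_id_j)$, a constant positive-definite matrix with bounds depending only on $n,\alpha$; hence (using the $C^0$/$W^{1,2}$ control of Corollary \ref{cor:uniform est}) the Gram matrix $G$ is positive definite with such bounds, and $u:=G^{-1/2}v$ satisfies $\fint_{B_p(r)}\langle\nabla u_i,\nabla u_j\rangle = \delta_{ij}$. Then $\fint_{B_p(r)}|\langle\nabla u_i,\nabla u_j\rangle - \delta_{ij}|^2$ is the oscillation of $\langle\nabla u_i,\nabla u_j\rangle$, and since $\nabla\langle\nabla u_i,\nabla u_j\rangle$ is a bounded combination of $\mathrm{Hess}_{u_i},\mathrm{Hess}_{u_j}$ (item \ref{item:pointwise grad}), Lemma \ref{lem:NPi} reduces this to an integral of $|\mathrm{Hess}_u|^2$. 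The sharper dependence $\sum_\ell\mathcal F_{s,2s}(x_\ell)$ — without the factor $(s/r)^n$ — is obtained by telescoping this average of $|\mathrm{Hess}_u|^2$ from $B_p(r)$ out to the ball $B_{x_\ell}(s)$ (of volume $\sim s^n$), on which $\int |H_{x_\ell}|^2\le Cs^n\mathcal F_{s,2s}(x_\ell)$ by Proposition \ref{prop:0pinching_2}, so that the averaged Hessian is of size $\mathcal F$ rather than $(s/r)^n\mathcal F$. As noted, the one genuinely delicate point throughout is the quantitative identification of the "trace defect'' constant $\fint_{B_p(r)}(1-|\nabla b_x|^2)$ with the pinching, which is where both the uniform estimates and the scale-by-scale telescoping are essential.
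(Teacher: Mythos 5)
Your plan has the right skeleton (take $v_j = \frac{b_{x_j}^2 - b_{x_0}^2 - d(x_j,x_0)^2}{2d(x_j,x_0)}$, split $\mathrm{Hess}_{b_x^2}$ into its trace-free part $H_x$ plus $2|\nabla b_x|^2 g$, control $H_x$ by Proposition \ref{prop:1pinching}(1), control the gradient oscillation by Proposition \ref{prop:1pinching}(2), then apply a linear normalization), but it misses the paper's key trick and consequently creates a gap that you correctly flag as ``the main obstacle'' but do not actually close. The paper does not build $\tilde u_j$ from $b_{x}^2$ directly; it uses $\tilde b_x^2 := b_x^2/c_{r,x}$ with $c_{r,x} = \fint_{B_p(r)}|\nabla b_x|^2\,dV$. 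With that normalization the trace part of $\mathrm{Hess}_{\tilde u_j}$ becomes
\[
\frac{1}{d_j}\left(\frac{|\nabla b_{x_j}|^2}{c_{r,x_j}} - 1\right)g - \frac{1}{d_j}\left(\frac{|\nabla b_{x_0}|^2}{c_{r,x_0}} - 1\right)g,
\]
and \emph{each} parenthesis has mean zero on $B_p(r)$ by construction, so Proposition \ref{prop:1pinching}(2) bounds both $L^2$ norms directly by the weighted-Hessian terms in item \eqref{item:Hess} --- there is simply no leftover constant $(c_j - c_0)^2$ to estimate. By contrast, your route leaves that constant, and your proposed remedy (show $|c_{r,x}-1|^2\lesssim (s/r)^n\mathcal F_{s,2s}(x) + r^2\fint |H_x|^2 b_x^{-2}$ via dyadic telescoping out to scale $s$) is not established anywhere in the paper and does not follow from its tools: Corollary \ref{cor:uniform est} yields $|c_{r,x}-1|<\epsilon$ only for a \emph{fixed} $\epsilon$ once $\delta_0$ is chosen small, not a bound decaying with the pinching. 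So as written there is a genuine gap where the paper has a clean, one-line fix.

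There is also a gap in your item \eqref{item:intgrad} argument. You reduce it to $\fint_{B_p(r)}|\mathrm{Hess}_u|^2$ by Poincar\'e (reasonable) and then claim the $(s/r)^n$ factor can be removed by ``telescoping this average of $|\mathrm{Hess}_u|^2$ from $B_p(r)$ out to $B_{x_\ell}(s)$.'' That cannot work: an average over a smaller ball is not controlled by the average over a larger one without paying the volume ratio $\sim (s/r)^n$, and the Hessian integrand may well concentrate near $B_p(r)$. The paper instead proves item \eqref{item:intgrad} by a compactness-contradiction argument following \cite[Lemma 6.7]{cjn}, producing a bounded lower-triangular matrix $A$ with $\fint_{B_p(r)}\langle\nabla u_j,\nabla u_\ell\rangle\,dV=\delta_{j\ell}$ exactly. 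Your $G^{-1/2}$-correction captures the spirit of that normalization, but your quantitative derivation of the $L^2$ bound without the $(s/r)^n$ factor does not go through.
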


\begin{proof} Begin with $\delta_0$ the lesser of that from Proposition \ref{prop:UniformEst}, with $\epsilon < \frac 12$, and $b_\infty / 2r$; thus at this point $\delta_0$ depends only on $n,\mathrm v, r,$ and $\epsilon$. As in Proposition \ref{prop:1pinching}, let $c_{r,x}:=\fint_{\Brp} |\nabla b_x|^2  \, dV$ for each point $x$. 

Consider the functions
\[
\tilde u_j :=\frac{ {\tilde b_{x_j}}^2-{\tilde b_{x_0}}^2-d_{x_0}(x_j)^2}{2d_{x_0}(x_j)}
\]where ${\tilde b_x}^2:=b_x^2/c_{r,x}$ for each point $x$.  By a direct calculation, Proposition \ref{prop:1pinching} item (1), and the fact that $\Delta b_{x_j}^2 = 2n|\nabla b_{x_j}|^2$, there exists a constant $C=C(n,\mathrm v,\alpha)>0$ such that
\begin{align*}
    \fint_{\Brp} |\text{Hess}_{\tilde u_j}|^2 \,dV& \leq \frac 1{4d_{x_0}(x_j)^2}\fint_{\Brp}\left|\frac 1{c_{r,x_j}}\text{Hess}_{ b_{x_j}^2}-\frac 1{c_{r,x_0}}\text{Hess}_{ b_{x_0}^2}\right|^2 \,dV\\
    & \leq \left(\frac 1{\delta r}\right)^n \frac {C}{d_{x_0}(x_j)^2}\left(\frac {\mathcal F_{\delta^{-1},2\delta^{-1}}(x_j)}{c_{r,x_j}^2}+ \frac{\mathcal F_{\delta^{-1},2\delta^{-1}}(x_0)}{c_{r,x_0}^2}\right) +\\& \quad \quad\quad\quad\quad +\frac {C}{d_{x_0}(x_j)^2}\left(\fint_{\Brp}\left| \frac {|\nabla b_{x_j}|^2}{c_{r,x_j}} - \frac {|\nabla b_{x_0}|^2}{c_{r,x_0}}\right|^2\,dV\right).
\end{align*}
By Proposition \ref{prop:UniformEst} and our choice of $\epsilon < \frac 12$, $c_{r,x_j}>1/2$ for each $j=0, \dots, k$. Further, by Proposition \ref{prop:1pinching} item (2), 
\[
\fint_{\Brp}\left| \frac {|\nabla b_{x_j}|^2}{c_{r,x_j}}-1 \right|^2 \, dV\leq r^2C(n,\mathrm v) \fint_{\Brp} \Hbx^2b_x^{-2} \, dV.
\]
Since $\{x_0, \cdots, x_k\}$ are $(k, \alpha)$-independent there is a lower bound $d_{x_0}(x_j) \ge C(\alpha)r$ for each $j=1 \dots k$. Taken together we conclude that
\begin{align*}
 \fint_{\Brp} & |\text{Hess}_{\tilde u_j}|^2 \,dV \leq\, C\frac{1}{\delta^n r^{n+2}}\left( \mathcal F_{\delta^{-1},2\delta^{-1}}(x_j) + \mathcal F_{\delta^{-1},2\delta^{-1}}(x_0)\right) \\
& +C\left(\fint_{\Brp} \left| \text{Hess}_{b_j^2} - \frac{\Delta b_j^2}{n}g\right|^2 b_{x_j}^{-2} \, dV + \fint_{\Brp} \left| \text{Hess}_{b_0^2} - \frac{\Delta b_0^2}{n}g\right|^2 b_{x_0}^{-2} \, dV\right),
\end{align*}
which is exactly item (\ref{item:Hess}).

As for item (\ref{item:pointwise grad}), we apply Proposition \ref{prop:1pinching} item (3) to get
\begin{equation}\label{eq:tildegradbound}
\sup_{B_p(r)}|\nabla \tilde{u}_j| \le \sup_{B_p(r)}\frac{|\nabla \tilde{b}^2_{x_j}| + |\nabla \tilde{b}^2_{x_0}|}{2d_{x_0}(x_j)} \le \frac{4b_\infty^{-2} r}{C(\alpha)r}\left(\frac 1{c_{r,x_0} }+ \frac 1{c_{r, x_j}} \right)\le C( n,\mathrm{v},\alpha).
\end{equation}

So we now have functions $\tilde u_i:B_r(p) \to \mathbb R$ satisfying items (\ref{item:Hess}) and (\ref{item:pointwise grad}). Also note by the $C^0$ estimate of Proposition \ref{prop:UniformEst} and the triangle inequality,
\begin{equation}\label{eq:olditem3}
\sup_{\Brp}\left|\tilde u_j-\frac{d_{x_j}^2 - d_{x_0}^2-d_{x_0}(x_j)^2}{2d_{x_0}(x_j)}\right|
\leq C(\alpha)\epsilon r.
\end{equation}

To get item (\ref{item:intgrad}) we follow the proof of \cite[Lemma 6.7]{cjn}. We claim that there exists a $k\times k$ lower triangular matrix $A$ where $|A| \leq C(n, \mathrm v, \alpha)$ and such that for
 $(u_1, \dots, u_k):=A(\tilde u_1, \dots, \tilde u_k)$,
 \[
 \fint_{\Brp}\langle \nabla u_j, \nabla u_\ell\rangle \, dV = \delta_{j\ell}.
 \]
As in \cite[Proposition 6.7]{cjn}, we proceed by contradiction. Hence assume there exist sequences of Riemannian manifolds $(M^n_i, g_i, p_i)$, with $\mathrm{v}_{M_i} \geq \mathrm{v}>0$, $\text{Ric}_{M_i} \geq 0$, $n \geq 3$, and $\{x_0^i, \dots, x_k^i\}\subset B^{d_i}_{p_i}(r)$ a collection of $(k,\alpha)$-independent points so that $B_{x^i_j}(\delta_i^{-1})$ are all $(0,\delta_i^2)$-symmetric where $\delta_i \to 0$,  and have uniform gradient bounds given by \eqref{eq:tildegradbound}, but either there do not exist $k\times k$ lower triangular matrices $A_i$ such that 
\[
(u_1^i, \dots, u_k^i):=A_i(\tilde u_1^i, \dots, \tilde u_k^i)
\]satisfy the estimate
\[
\fint_{B_r(p_i)} \langle \nabla u_j^i, \nabla u_\ell^i \rangle \, dV_i= \delta_{j\ell} 
\] or if such matrices exist then $|A_i| \to \infty$. Note that we may choose this sequence in such a way that the  functions $\tilde u_j^i$, $j=1, \dots, k$, satisfy \eqref{eq:olditem3} with $\epsilon$ replaced by $\epsilon_i \to 0$.

Letting $(M_i, d_i, p_i,\mu_{g_i}) \to (M_\infty, d_\infty, p_\infty, \mu_\infty)$ in the pointed measured Gromov-Hausdorff sense, we further have $x_j^i \to x_j^\infty \in \overline {B_r(p_\infty)}$ for $j=0, \dots, k$. By the standard splitting theorem, $M_\infty=\R^k \times C(Y)$ for some cone $C(Y)$ and $x_j^\infty \in \R^k \times \{o\}$ where $o$ is the cone point of $C(Y)$. As the $\tilde u_j^i$ converge in $C^0$ uniformly, we observe that
\[
\tilde u_j^i \to \tilde u_j^\infty:=\frac{ {d_\infty^2({x_j^\infty}, \cdot)}-{d_\infty^2({x_0^\infty}, \cdot)}-d_\infty^2({x_0^\infty},x_j^\infty)}{2d_\infty({x_0^\infty},x_j^\infty)} \text{ on }B_{p_\infty}(r)
\]which is a linear function. Since $\tilde u_1^\infty, \dots, \tilde u_k^\infty$ form a basis for $\R^k$ there exists a lower triangular matrix $A_\infty=A_\infty(n,\mathrm v,\alpha)$ such that for
\[
(u_1^\infty, \dots, u_k^\infty):= A_\infty(\tilde u_1^\infty, \dots, \tilde u_k^\infty),
\]it holds that
\[
\fint_{B_r(p_\infty)}\langle \nabla u_j^\infty, \nabla u_\ell^\infty \rangle\, dV_\infty = \delta_{j\ell}.
\]

We can appeal to the $W^{1,2}$-convergence on balls from \cite[Proposition 4.29]{cjn} and complete the contradiction. Namely, there exist lower triangular matrices $\hat A_i$ with $|\hat A_i - A_\infty| \to 0$ such that the functions $(\hat u_1^i, \dots, \hat u_k^i):= \hat A_i(\tilde u_1^i, \dots, \tilde u_k^i)$ satisfy the orthogonality conditions on $B_{p_i}(r)$. Since $|A_\infty|$ has bounded norm, this implies a contradiction. This proves item \eqref{item:intgrad}.

Also note that the linear and bounded change introduced by $A$ does not destroy items \eqref{item:Hess} and \eqref{item:pointwise grad}.

\end{proof}

\begin{remark} \label{rmk: not harmonic}
In addition to satisfying $L^2$ bounds on the Hessian and gradient as in items (\ref{item:Hess}) and (\ref{item:intgrad}), almost $k$-splitting maps arising in the literature are often harmonic by construction. For instance in \cite[Proposition 6.4]{cjn}, the $k$-splitting functions are defined using the Poisson solutions depending on the domain, not the Green function on the manifold. 

While the almost splitting functions we consider are quite natural objects defined using the Green functions centered at the points $x_i$, they do not need to be harmonic. Indeed, in general the values of $|\Delta u_i|$ are bounded by a constant depending on $n$ and $\tv$ but are not necessarily zero. 

Note that item (\ref{item:intgrad}) implies that $|\nabla u_i| \leq 1+ \epsilon$ on average when the pinching of $\mathcal F$ is small. If $u_i$ is harmonic, this can be improved to a pointwise bound as in \cite[Lemma 3.34]{chen2}. The improvement follows when the term $\langle \nabla \Delta u_i, \nabla u_i\rangle$ involved in the Bochner formula is 0 or small in an integral sense (see \cite[Lemma 4.3--4.4]{hp}). Therefore, attaining higher derivative control on $u_i$ would allow us to conclude a similar pointwise result.
\end{remark}
Corollary \ref{cor:symmetry} and Remark \ref{rem:symmetry} immediately imply that the result of Theorem \ref{thm:k splitting} holds if we replace the hypothesis that $B_{x_i}(1/\delta)$ is $(0,\delta^2)$-symmetric by a hypothesis purely on the pinching.

\begin{corollary}\label{cor:splitting estimates}
Let $r, \alpha >0$.  There exist $\delta_0=\delta_0(n,\mathrm v, \alpha,r)>0$ and $C=C(n,\mathrm v,\alpha)>0$ such that if $\{x_0, x_1, \dots, x_k\} \subset B_p(r)$ are $(k, \alpha)$-independent and $\mathcal F_{2\delta^{-1},4\delta^{-1}}(x_i)<\delta^{4+4\mu}/C$, where $0<\delta<\delta_0$ and $\mu,C$ are the constants from Corollary \ref{cor:symmetry}, then the result of Theorem \ref{thm:k splitting} holds.
\end{corollary}

Observe that if the integral of the trace-free Hessian on the right hand side of item (\ref{item:Hess}) of Theorem \ref{thm:k splitting} had not been weighted with $b^{-2}$, one could immediately apply Proposition \ref{prop:0pinching_2} to obtain an estimate of the $k$-splitting map purely by the pinching $\mathcal F$. In the following proposition, we note that if the pinching at large scales is sufficiently small, relative to both its scale and the supremum of the trace-free Hessian on a much smaller ball, then we can bound the second term on the right hand side of item \eqref{item:Hess} in Theorem \ref{thm:k splitting} by the pinching.

\begin{proposition}\label{prop:pinching bound} 
 For $p \in M$ and $r>0$, let $H:= \sup_{x \in B_p(2r)} \left|\text{Hess}_{b_x^2} - \frac{\Delta b_x^2}{n}g\right|^2(x)$. There exists $\eta_0=\eta_0(H,n)>0$ such that for all $s > 2b_\infty^{-1}r$, if $x \in \Brp$ is such that $\left( \frac{s}{r}\right)^n\mathcal F_{s,2s}(x)<\eta_0$, then
\[s^2\fint_{\Brp} \left| \text{Hess}_{b_x^2} - \frac{\Delta b_x^2}{n}g\right|^2 b_x^{-2} \, dV \le C(H,n,\tv) \left(\frac sr\right)^{n}  \mathcal F_{s,2s}(x)^{1-2/n}.\]  
\end{proposition}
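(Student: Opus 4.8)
The plan is to split the sublevel set $B_p(r)$ into a "good" region where $b_x$ is bounded below and a "bad" region near the zero set of $b_x$ (a small neighborhood of $x$), estimate the weighted Hessian integral separately on each, and optimize the radius of the bad region against the pinching. Concretely, fix a radius $\rho>0$ (to be chosen) and write
\[
\fint_{\Brp} \Hbx^2 b_x^{-2}\,dV = \fint_{\Brp \cap \{b_x > \rho\}}(\cdots) + \fint_{\Brp\cap\{b_x \le \rho\}}(\cdots).
\]
On the first piece we simply pull out $b_x^{-2}\le \rho^{-2}$ and apply Proposition \ref{prop:1pinching} item (1) to bound $\fint_{\Brp}\Hbx^2\,dV \le C(n,\tv)(s/r)^n\mathcal F_{s,2s}(x)$; this contributes $C(n,\tv)\rho^{-2}(s/r)^n\mathcal F_{s,2s}(x)$. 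On the second piece we use the pointwise bound $\Hbx^2 \le H$ on $B_p(2r)\supset B_p(r)$ (valid since $x\in B_p(r)\subset B_p(2r)$, by definition of $H$), so the integrand is at most $H b_x^{-2}$, and we must control $\int_{\Brp\cap\{b_x\le\rho\}} b_x^{-2}\,dV$. Since $|\nabla b_x|\le b_\infty^{-1}$ and $b_x$ vanishes like the distance to $x$ near $x$ (by \eqref{eqn: CM b conv} and interior estimates, $b_x\sim b_\infty^{-1}d_x$ there), the set $\{b_x\le\rho\}$ is contained in a metric ball of radius $\lesssim b_\infty\rho$ around $x$, and by Bishop–Gromov $\mathrm{Vol}(\{b_x\le\rho\}) \le C(n)(b_\infty\rho)^n$; then a standard layer-cake / co-area argument gives $\int_{\{b_x\le\rho\}}b_x^{-2}\,dV \le C(n,\tv)\rho^{n-2}$ (this is where $n\ge 3$ is used so the singularity is integrable). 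Thus the second piece contributes $C(H,n,\tv)\rho^{n-2}/|B_p(r)| \le C(H,n,\tv) r^{-n}\rho^{n-2}$.

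Putting these together and multiplying by $s^2$,
\[
s^2\fint_{\Brp}\Hbx^2 b_x^{-2}\,dV \le C(n,\tv)\frac{s^2}{\rho^2}\Big(\frac sr\Big)^n\mathcal F_{s,2s}(x) + C(H,n,\tv)\frac{s^2\rho^{n-2}}{r^n}.
\]
Now optimize in $\rho$: balancing the two terms gives $\rho^n \sim r^n (s/r)^n\mathcal F_{s,2s}(x)$, i.e. $\rho \sim s\,\mathcal F_{s,2s}(x)^{1/n}$, and substituting yields a bound of the form $C(H,n,\tv) s^2 r^{-n} \rho^{n-2} \sim C(H,n,\tv)(s/r)^n s^{?}\cdots$; carrying the exponents through, both terms become $C(H,n,\tv)(s/r)^n \mathcal F_{s,2s}(x)^{1-2/n}$ after using $s^2/r^2 \cdot (s/r)^n = (s/r)^{n}\cdot(s/r)^2$ — one must be slightly careful here, absorbing the extra powers of $s/r$ using $s>2b_\infty^{-1}r$ and the hypothesis $(s/r)^n\mathcal F_{s,2s}(x)<\eta_0$, which also guarantees the chosen $\rho$ satisfies $\rho < r$ (or at least $\rho$ small enough that $\{b_x\le\rho\}\cap B_p(r)$ genuinely sits near $x$ and the volume bound applies) — this is precisely the role of the smallness threshold $\eta_0=\eta_0(H,n)$.

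The main obstacle I expect is the volume/integrability estimate $\int_{\{b_x \le \rho\}} b_x^{-2}\,dV \le C(n,\tv)\rho^{n-2}$ near the pole, and in particular making the comparison $b_x \approx b_\infty^{-1}d_x$ near $x$ quantitative enough that the sublevel set $\{b_x\le\rho\}$ is trapped in a controlled metric ball; one needs the asymptotics of $b_x$ near its singularity (the Green function blows up like $d_x^{2-n}$ at $x$, so $b_x\to 0$ like $d_x$) together with a Bishop–Gromov volume upper bound, and then a co-area decomposition over dyadic level sets $\{2^{-j-1}\rho < b_x \le 2^{-j}\rho\}$, each of volume $\lesssim (b_\infty 2^{-j}\rho)^n$, summing the geometric series in $j$ since $n>2$. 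The remaining steps — the good-region bound via Proposition \ref{prop:1pinching}, and the elementary optimization in $\rho$ — are routine, and the hypotheses are tailored so that the optimal $\rho$ is admissible.
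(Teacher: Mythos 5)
Your proposal is correct and follows essentially the same strategy as the paper: split into a small region near the pole where $b_x^{-2}$ blows up (you use the sublevel set $\{b_x\le\rho\}$, the paper uses a metric ball $B_x(\eta)$ — equivalent up to uniform constants), bound the far region by pulling out $\sup b_x^{-2}$ and applying the $\mathcal F$-pinching estimate (you cite Proposition~\ref{prop:1pinching}(1), the paper uses Proposition~\ref{prop:0pinching_2} directly — these are the same input), bound the near region by $H$ times $\int b_x^{-2}$ with a Bishop--Gromov/dyadic-annulus estimate, and then optimize the radius. Your choice to decompose along level sets of $b_x$ actually lets you skip the maximum-principle step the paper uses to control $b_x^{-2}$ on the exterior, a mild simplification.

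One small inaccuracy in your justification of the key volume bound $\int_{\{b_x\le\rho\}} b_x^{-2}\,dV \lesssim \rho^{n-2}$: you need the inclusion $\{b_x \le \rho\} \subset B_x(C\rho)$, which requires a lower bound $b_x \ge d_x/C'$ with $C'=C'(n,\tv)$. This does not follow from \eqref{eqn: CM b conv} (an asymptotic statement at infinity) or vague ``interior estimates''; the correct citation is the uniform two-sided comparison from Mok--Siu--Yau \cite{msy} (see also \cite[Proposition 1.15]{cm-schroedinger}), which the paper invokes explicitly to control $\sup d_x^2/b_x^2$. With that reference inserted, the argument is complete and matches the paper's.
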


\begin{proof} 
Fix $\eta>0$, to be chosen sufficiently small later, so that $B_x(\eta) \subset B_p(2r)$.
Since $G$ is harmonic and strictly positive on $B_p(2r) \setminus B_x(\eta)$, by the maximum principle for the harmonic function $G$,
\[ 
\sup_{B_p(2r) \setminus B_x(\eta)}b_x^{-2} \le \sup_{\partial B_x(\eta)} b_x^{-2} \le \eta^{-2} \sup_{\partial B_x(\eta)}\frac{d_x^{2}}{b_x^2}.
\]Noting that $B_p(r) \subset B_x(2r)\subset \{b_x \leq 2b_\infty^{-1}r\}$, where the second containment follows from \eqref{eq:bgradbound}, Proposition \ref{prop:0pinching_2} implies that \begin{align*}
 \int_{B_p(r) \setminus B_x(\eta)} \left| \text{Hess}_{b_x^2} - \frac{\Delta b_x^2}{n}g\right|^2 b_x^{-2} \, dV &\le \eta^{-2} \sup_{\partial B_x(\eta)}\frac{d_x^{2}}{b_x^2} \int_{B_x(2r)} \left| \text{Hess}_{b_x^2} - \frac{\Delta b_x^2}{n}g\right|^2\, dV \\
 &\le C(n)\eta^{-2}s^n\left( \sup_{\partial B_x(\eta)}\frac{d_x^{2}}{b_x^2}\right)  \mathcal F_{s,2s}(x).   
\end{align*}
Further, observe that 
\begin{align*}
\int_{B_x(\eta)}\left| \text{Hess}_{b_x^2} - \frac{\Delta b_x^2}{n}g\right|^2 b_x^{-2} \, dV &\le \sup_{B_x(\eta)} \left|\frac{d_x}{b_x}\left(\text{Hess}_{b_x^2} - \frac{\Delta b_x^2}{n}g\right)\right|^2 \int_{B_x(\eta)}d_x^{-2} dV \\
&\le C(\tv)\sup_{B_x(\eta)} \left|\frac{d_x}{b_x}\left(\text{Hess}_{b_x^2} - \frac{\Delta b_x^2}{n}g\right)\right|^2 \int_0^\eta s^{-2} s^{n-1}ds \\
&\le {C(n,\tv)}\sup_{B_x(\eta)}\left|\frac{d_x}{b_x}\left(\text{Hess}_{b_x^2} - \frac{\Delta b_x^2}{n}g\right)\right|^2\eta^{n-2} \\
&\le  {C(n,\tv)} \left(\sup_{B_x(\eta)}\frac{d_x^2}{b_x^2}\right) 
H\eta^{n-2} .
\end{align*}
Summing up and averaging, we have that
\begin{align*}
\fint_{\Brp} \left| \text{Hess}_{b_x^2} - \frac{\Delta b_x^2}{n}g\right|^2 b_x^{-2} \, dV \le & C(n,\tv)\left(\sup_{B_x(\eta)}\frac{d_x^{2}}{b_x^2}\right)\eta^{-2}\left( \frac{s^n}{r^n}\mathcal F_{s,2s}(x) + \frac{\eta^n}{r^n}H \right)
\end{align*}
for any $0<\eta<2r-d_x(p)$.  By \cite{msy} (also see \cite[Proposition 1.15]{cm-schroedinger}), there exists a uniform constant $C'=C'(n, \tv) \ge 1$, so that for every $x,y \in M,$ 
\begin{equation*}\label{eq:C'def}
\frac{d_x(y)}{C'}\leq b_x(y).
\end{equation*}It follows that
\[
\sup_{B_x(\eta)}\frac{d_x^{2}}{b_x^2}\leq C(n,\tv)
\]for any $\eta>0$. Thus, for any $\eta<r$,
\[
\fint_{\Brp} \left| \text{Hess}_{b_x^2} - \frac{\Delta b_x^2}{n}g\right|^2 b_x^{-2} \, dV 
\le C(n,\tv) \eta^{-2}\left( \frac{s^n}{r^n}\mathcal F_{s,2s}(x) + \frac{\eta^n}{r^n}H \right).
\]
Optimizing the right hand side for $\eta$, we observe that if $\eta_0$ is small enough so that $C(n)\frac{\mathcal F_{s,2s}(x)}Hs^n <r^n$ then we may substitute for the optimal $\eta=\left(\frac{2s^n \mathcal{F}_{s,2s}(x)}{(n-2)H} \right)^{1/n}$ and arrive at the estimate
\[
s^2\fint_{\Brp} \left| \text{Hess}_{b_x^2} - \frac{\Delta b_x^2}{n}g\right|^2 b_x^{-2} \, dV \le C(H,n,\tv) \frac{s^n}{r^n} \mathcal F_{s,2s}(x)^{1-2/n}.
\]
\end{proof}

Combining the proposition and Theorem \ref{thm:k splitting}, we immediately get the following precise version of Theorem \ref{thm:k splitting by pinching only intro}.

\begin{theorem}\label{thm:k splitting by pinching only}
For $p \in M$ and $r>0$, let $H:= \sup_{x \in B_p(2r)} \left|\text{Hess}_{b_x^2} - \frac{\Delta b_x^2}{n}g\right|^2(x)$. Let $\alpha >0$.  There exist $\delta_0=\delta_0(n,  \mathrm v, \alpha,r)>0$, $C=C(n,\mathrm v, \alpha)>0$, and $\eta_0=\eta_0(H,n)>0$ such that if $\{x_0, x_1, \dots, x_k\} \subset B_p(r)$ are $(k, \alpha)$-independent and, for each $i=0\dots k$, $B_{x_i}(1/\delta)$ is $(0,\delta^2)$-symmetric where $0<\delta<\delta_0$, and moreover $\left(\frac{1}{r\delta } \right)^n\mathcal F_{\delta^{-1},2\delta^{-1}}(x_i)<\eta_0$,
then there exists a function $u:\Brp \to \R^k$ 
such that for each $i,j = 1,\cdots,k$,

 \begin{enumerate}[(1)]
     \item \begin{align*}r^2\fint_{\Brp}&|\text{Hess}_{u_j}|^2 \,dV  \\& \leq C\frac{1}{(\delta r)^{n}}\left( \mathcal F_{\delta^{-1},2\delta^{-1}}(x_j) + \mathcal F_{\delta^{-1},2\delta^{-1}}(x_0)+(\delta r)^2\left( \mathcal F_{\delta^{-1},2\delta^{-1}}(x_j)^{1-2/n} + \mathcal F_{\delta^{-1},2\delta^{-1}}(x_0)^{1-2/n}\right)\right) ,\end{align*}
     \item $\fint_{\Brp}\left|\langle \nabla u_i, \nabla u_j\rangle - \delta_{ij}\right|^2 \,dV\leq C \sum_{\ell=0}^{k} \mathcal F_{\delta^{-1},2\delta^{-1}}(x_\ell)$, 
     \item $\sup_{\Brp}|\nabla u_i|\leq C$.
 \end{enumerate}    
\end{theorem}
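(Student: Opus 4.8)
The statement is an immediate consequence of Theorem \ref{thm:k splitting} combined with Proposition \ref{prop:pinching bound}, so the plan is essentially an assembly argument rather than a fresh estimate. First I would invoke Theorem \ref{thm:k splitting} with the given $(k,\alpha)$-independent points $\{x_0,\dots,x_k\}$: choosing $\delta_0$ to be (the minimum of) the threshold from that theorem, the hypothesis that each $B_{x_i}(s)$ is $(0,\delta)$-symmetric with $s\ge r/\delta$ produces the function $u:\Brp\to\R^k$ satisfying items (2) and (3) verbatim, as well as item \eqref{item:Hess}, namely
\begin{align*}
 r^2\fint_{\Brp} |\text{Hess}_{u_j}|^2 \,dV \leq\, &C\frac{s^n}{r^{n}}\left( \mathcal F_{s,2s}(x_j) + \mathcal F_{s,2s}(x_0)\right) \\
 &+Cr^2\left(\fint_{\Brp} \Hbxi[j]^2 b_{x_j}^{-2} \, dV + \fint_{\Brp} \Hbxi[0]^2 b_{x_0}^{-2} \, dV\right).
\end{align*}
(Here I am abusing my own macro notation for brevity; in the actual writeup I would just copy the trace-free Hessian terms as written in Theorem \ref{thm:k splitting}.) Items (2) and (3) of the present theorem are then already done, since they are literally items (2) and (3) of Theorem \ref{thm:k splitting}.

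The remaining task is to control the two weighted trace-free Hessian integrals in the displayed bound by the pinching. For this I apply Proposition \ref{prop:pinching bound} separately at $x=x_j$ and $x=x_0$. That proposition requires $s>2b_\infty^{-1}r$ — which holds once $\delta_0$ is small enough, since $s\ge r/\delta\ge r/\delta_0$ — and requires the smallness hypothesis $(s/r)^n\mathcal F_{s,2s}(x_i)<\eta_0$, which is exactly the third hypothesis of the present theorem; the threshold $\eta_0=\eta_0(H,n)$ is the one furnished by Proposition \ref{prop:pinching bound}, with $H=\sup_{x\in B_p(2r)}\Hbx^2(x)$ as in both statements. Proposition \ref{prop:pinching bound} then gives, for $i\in\{0,j\}$,
\[
s^2\fint_{\Brp} \Hbx^2 b_x^{-2}\,dV\Big|_{x=x_i} \le C(H,n,\tv)\left(\frac sr\right)^n \mathcal F_{s,2s}(x_i)^{1-2/n},
\]
hence $r^2\fint_{\Brp}\Hbx^2 b_x^{-2}\,dV|_{x=x_i}\le C(H,n,\tv)\frac{r^2}{s^2}\frac{s^n}{r^n}\mathcal F_{s,2s}(x_i)^{1-2/n}$. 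Substituting these two bounds into the displayed estimate for $r^2\fint|\text{Hess}_{u_j}|^2$ and collecting the factor $s^n/r^n$ yields precisely item (1) of the present theorem, with a constant $C$ absorbing the dependence on $n,\tv,\alpha$ and $H$.

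There is no genuine obstacle here — the work was already done in Theorem \ref{thm:k splitting} and Proposition \ref{prop:pinching bound}. The only points requiring a word of care in the writeup are bookkeeping: (i) taking $\delta_0$ to be the minimum of the thresholds coming from Theorem \ref{thm:k splitting} and from the requirement $r/\delta_0 > 2b_\infty^{-1}r$ (equivalently $\delta_0<b_\infty/2$), so that the hypothesis of Proposition \ref{prop:pinching bound} that $s>2b_\infty^{-1}r$ is automatic; (ii) noting that $\eta_0$ depends only on $H$ and $n$, consistent with the statement; and (iii) observing that the constants $C$ in items (2), (3) are unchanged from Theorem \ref{thm:k splitting} while the constant in item (1) now additionally depends on $H$ through Proposition \ref{prop:pinching bound}, which is why the statement writes $C=C(n,\mathrm v,\alpha)$ for the former but the bound in (1) implicitly carries the $H$-dependence — in the writeup I would either make this explicit or note it. With these remarks the proof is a one-paragraph citation of the two prior results.
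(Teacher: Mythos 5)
Your proof is correct and matches the paper's approach exactly: the paper simply states ``Combining the proposition and Theorem \ref{thm:k splitting}, we immediately get the following precise version,'' which is precisely the assembly you carry out, including the bookkeeping with $\delta_0 < b_\infty/2$ to guarantee $s > 2b_\infty^{-1}r$. Your observation that the constant in item (1) implicitly carries $H$-dependence through Proposition \ref{prop:pinching bound}, while the theorem statement writes $C=C(n,\mathrm v,\alpha)$, is a fair and legitimate remark about a small imprecision in the statement.
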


\begin{remark} 
Once again note that we can replace the $(0,\delta^2)$-symmetry hypothesis by the assumption that $\mathcal{F}_{2\delta^{-1},4\delta^{-1}}(x_i) \leq \delta^{4+4\mu}/C$, by Remark \ref{rem:symmetry}.
\end{remark}

Finally, we state the precise rigidity theorem. The proof follows from standard compactness-contradiction results, which we leave to the reader.

\begin{theorem}\label{thm:rigidity}
 For $p \in M$ and $r>0$, let $H:= \sup_{x \in B_p(2r)} \left|\text{Hess}_{b_x^2} - \frac{\Delta b_x^2}{n}g\right|^2(x)$. Let $\alpha >0$.  Given $\epsilon>0$ there exist $\delta_0=\delta_0( \epsilon,n,  \mathrm v, \alpha,r)>0$, $C=C(n,\mathrm v, \alpha)>0$, and $\eta_0=\eta_0(H,n)>0$ such that if $\{x_0, x_1, \dots, x_k\} \subset B_p(r)$ are $(k, \alpha)$-independent and, for each $i=0\dots k$, $B_{x_i}(1/\delta)$ is $(0,\delta^2)$-symmetric where $0<\delta<\delta_0$, and moreover $\left(\frac{1}{r\delta} \right)^n\mathcal F_{\delta^{-1},2\delta^{-1}}(x_i)<\eta_0$,   then there exist $C(Y)$, the cone over a metric measure space $Y$, and a function $\mathcal U:=(u,\tilde u): B_p(r) \to \R^k \times C(Y)$, where $u:B_p(r) \to \R^k$ is the function from Theorem \ref{thm:k splitting by pinching only}, such that  
\[
d_{GH}(B_p({r/2}), \mathcal U(B_p({r/2})))<\epsilon r.
\]
\end{theorem}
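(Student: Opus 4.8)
\textbf{Proof strategy for Theorem \ref{thm:rigidity}.} The plan is to run a standard compactness-and-contradiction argument, using the splitting map $u$ from Theorem \ref{thm:k splitting by pinching only} as one factor of $\mathcal U$ and producing the second factor $\tilde u$ as the radial function on a limit cone. Suppose the conclusion fails for some fixed $\epsilon>0$. Then there are manifolds $(M_i^n,g_i,p_i)$ with $\mathrm{Ric}_{M_i}\ge 0$ and $\mathrm v_{M_i}\ge\mathrm v$, collections of $(k,\alpha)$-independent points $\{x_0^i,\dots,x_k^i\}\subset B_{p_i}(r)$, scales $s_i$, and $\delta_i\to 0$ with $(s_i/r)^n\mathcal F_{s_i,2s_i}(x_\ell^i)\to 0$ and each $B_{x_\ell^i}(s_i)$ being $(0,\delta_i)$-symmetric, but for which no $\mathcal U$ gives GH-distance less than $\epsilon r$ on $B_{p_i}(r/2)$.

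First I would pass to a pointed measured Gromov--Hausdorff limit $(M_i,d_i,p_i,\mu_{g_i})\to(M_\infty,d_\infty,p_\infty,\mu_\infty)$. Because the $(k,\alpha)$-independent points limit to $(k,\alpha)$-independent points $x_\ell^\infty$ and the $\mathcal F$-pinching vanishes in the limit, the limit space admits $k$ independent "almost lines" through a common region, so by Cheeger--Colding's splitting theorem $M_\infty=\mathbb R^k\times C(Y)$ for some metric measure cone $C(Y)$ with vertex $o$, with the $x_\ell^\infty$ lying in $\mathbb R^k\times\{o\}$. Next, as in the proof of Theorem \ref{thm:k splitting}, the functions $\tilde u_j^i$ converge in $C^0$ (by Proposition \ref{prop:UniformEst}) and in $W^{1,2}$ on balls (by \cite[Proposition 4.29]{cjn}) to the linear coordinate functions $\tilde u_j^\infty$ on $\mathbb R^k$, and after applying the bounded lower-triangular normalization matrices $\hat A_i\to A_\infty$, the maps $u^i$ converge to the orthonormal linear coordinates $u^\infty:M_\infty\to\mathbb R^k$, which is exactly the projection onto the $\mathbb R^k$ factor. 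For the cone factor, one takes $\tilde u^\infty:M_\infty\to C(Y)$ to be the projection onto the $C(Y)$ factor; then $\mathcal U_\infty=(u^\infty,\tilde u^\infty):M_\infty\to\mathbb R^k\times C(Y)$ is an isometry, so in particular $d_{GH}(B_{p_\infty}(r/2),\mathcal U_\infty(B_{p_\infty}(r/2)))=0$.

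Finally I would transfer this back to the approximating sequence. One defines $\tilde u^i:B_{p_i}(r)\to C(Y)$ by composing a GH-approximation $B_{p_i}(r)\to B_{p_\infty}(r)$ with $\tilde u^\infty$, or equivalently builds $\tilde u^i$ directly as the $C(Y)$-coordinate of a GH-approximation $M_i\to\mathbb R^k\times C(Y)$; combined with $u^i$, this gives a map $\mathcal U^i:=(u^i,\tilde u^i)$ whose image $\epsilon_i$-approximates $B_{p_i}(r/2)$ with $\epsilon_i\to 0$, since both factors converge. For $i$ large this contradicts $d_{GH}(B_{p_i}(r/2),\mathcal U^i(B_{p_i}(r/2)))\ge\epsilon r$. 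The main subtlety — the step that requires the most care — is ensuring the second coordinate $\tilde u^i$ can be defined intrinsically on $M_i$ (rather than only on the limit) in a way that is compatible with the already-constructed $u^i$ and with the metric on $\mathbb R^k\times C(Y)$; this is where one genuinely uses the pointed GH-convergence of the pair together with the $W^{1,2}$-convergence of $u^i$, so that the joint map $\mathcal U^i$ — not just each factor separately — is an $o(1)$-Gromov--Hausdorff approximation onto its image. This is by now routine in the Cheeger--Colding theory, which is why the detailed verification is left to the reader.
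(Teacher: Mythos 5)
Your overall strategy---compactness and contradiction, letting $M_i\to M_\infty=\mathbb{R}^k\times C(Y)$, identifying $u^\infty$ with the Euclidean projection, taking $\tilde u^\infty$ to be the cone projection, and pulling $\tilde u^\infty$ back to $M_i$ via GH-approximations---is exactly the ``standard compactness-contradiction'' argument the paper has in mind (it explicitly leaves the proof to the reader), and the mechanism you use mirrors the contradiction argument already carried out in the paper's proof of Theorem~\ref{thm:k splitting}.

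Two points deserve correction, though neither is fatal. First, your contradiction sequence should not be assumed to satisfy $(s_i/r)^n\mathcal{F}_{s_i,2s_i}(x_\ell^i)\to 0$; the theorem's threshold $\eta_0=\eta_0(H,n)$ is \emph{independent} of $\epsilon$, so negating the statement only yields $\delta_i\to 0$ (and hence $s_i\to\infty$) while $(s_i/r)^n\mathcal{F}_{s_i,2s_i}(x_\ell^i)$ remains merely bounded by $\eta_0$. Fortunately this extra (unjustified) decay is not used: the convergence of the $\tilde u_j^i$ to linear functions rests on Proposition~\ref{prop:UniformEst} and \cite[Proposition~4.29]{cjn}, which are driven by the $(0,\delta_i)$-symmetry with $\delta_i\to 0$, not by the vanishing of $\mathcal{F}$. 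Second, the route to $M_\infty=\mathbb{R}^k\times C(Y)$ is more cleanly phrased as \emph{cone-splitting} rather than via ``$k$ independent almost lines'': the $(0,\delta_i)$-symmetry of $B_{x_\ell^i}(s_i)$ with $\delta_i\to0$, $s_i\to\infty$ forces $M_\infty$ to be a metric cone with vertex at each $x_\ell^\infty\in\overline{B_{p_\infty}(r)}$, and since the $(k,\alpha)$-independence persists in the limit, the presence of $k+1$ independent cone vertices forces the $\mathbb{R}^k$-factor by the Cheeger--Colding cone-splitting principle, exactly as in the proof of Theorem~\ref{thm:k splitting}. With these two adjustments the argument is complete and is the one the authors intend.
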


\end{document}